\documentclass{amsart}
\usepackage{graphicx,amsmath,amssymb} 
\usepackage[all,2cell]{xy}

\title{Equivariant geometry of low-dimensional quadrics}
\author{Brendan Hassett and Yuri Tschinkel}
\date{October 2024}

\theoremstyle{plain}
\newtheorem{prop}{Proposition}[section]

\newtheorem{theo}[prop]{Theorem}
\newtheorem{coro}[prop]{Corollary}

\newtheorem{lemm}[prop]{Lemma}

\theoremstyle{definition}
\newtheorem{defi}[prop]{Definition}

\newtheorem{rema}[prop]{Remark}

\newtheorem{exam}[prop]{Example}

\makeatother
\makeatletter

\newcommand{\bA}{\mathbb A}

\newcommand{\bF}{\mathbb F}
\newcommand{\bG}{\mathbb G}

\newcommand{\bP}{\mathbb P}

\newcommand{\cE}{\mathcal E}

\newcommand{\cO}{\mathcal O}

\newcommand{\cS}{\mathcal S}

\newcommand{\rH}{\mathrm H}

\newcommand{\fD}{\mathfrak D}
\newcommand{\fS}{\mathfrak S}

\newcommand{\sB}{\mathsf B}
\newcommand{\sG}{\mathsf G}
\newcommand{\sL}{\mathsf L}
\newcommand{\sP}{\mathsf P}
\newcommand{\sT}{\mathsf T}
\newcommand{\sU}{\mathsf U}

\newcommand{\Pf}{\mathrm{Pf}}

\newcommand{\Bl}{\operatorname{Bl}}

\newcommand{\Fl}{\operatorname{Fl}}

\newcommand{\Gr}{\operatorname{Gr}}

\def\GL{\mathsf{GL}}
\def\SL{\mathsf{SL}}
\newcommand{\PGL}{\mathsf{PGL}}

\newcommand{\SO}{\mathsf{SO}}

\newcommand{\Pic}{\operatorname{Pic}}
\newcommand{\Spec}{\operatorname{Spec}}

\newcommand{\ra}{\rightarrow}

\begin{document}

\begin{abstract}
We provide new stable linearizability constructions for regular actions 
of finite groups on homogeneous spaces and low-dimensional quadrics.
\end{abstract}

\maketitle

\section{Introduction}

Let $G$ be a finite group, acting generically freely and regularly on a smooth projective variety $X$. Of particular interest are linear actions, i.e., generically free actions on $\bP(V)$ arising from a linear representation $V$ of $G$. Actions which are equivariantly birational to a linear action are called {\em linearizable} (or {\em linear}); in particular $X$ is a rational variety. The classification of such actions, up to birationality, is an open problem even for linear actions on $\bP^2$ (see \cite{DI}, \cite{TYZ} and references therein). 

A related problem is to understand {\em stable} linearizability, i.e., linearizability of $G$-actions on $X\times \bP^m$, with trivial action on the second factor. 
Apart from its intrinsic interest, this 
property is relevant for the study of automorphisms of fields of invariants \cite{Kollar}.
Until recently, the only known instances of stable equivariant birationalities 
were those arising from faithful linear actions of $G$, and a particular nonlinearizable but stably linearizable action of the dihedral group $\fD_6$ (of order 12) on a quadric surface \cite{lemire}. New tools, such as the $G$-equivariant version of the universal torsor formalism of Colliot-Th\'el\`ene-Sansuc \cite{Colliot-S} allowed us to settle the stable linearizability problem for quadric surfaces \cite[Section 7]{HTNagoya}.

In this note, we focus on quadric hypersurfaces of dimension three
and four. This is an interesting class of examples; indeed, it is
already unknown whether or not every $\fS_3$-action on a smooth quadric threefold is linearizable. 
On the one hand, cohomological obstructions and the universal torsor formalism play a 
limited role for these varieties. On the other hand, this is a good testing ground
for stable linearization: the Pfaffian
constructions of \cite{BBT}, interpreted via coincidences among 
Lie groups; the reduction to $2$-Sylow subgroups, in the spirit of of Springer's Theorem, studied in
\cite{DR}; and techniques from the stable birational geometry 
of quadrics over nonclosed fields.  

In Section~\ref{sect:gen} we recall  basic notions from equivariant geometry and connect it to versality for group actions on varieties. 
In Section~\ref{sect:quad} we investigate the impact of isotropic subspaces on stable linearizability; 
see Theorem~\ref{theo:isotropicSL}. As in the case of quadrics over nonclosed fields \cite{Totaro}, the anisotropic quadrics have the 
richest birational geometry. In Section~\ref{sect:flag} we turn to flag varieties for special linear groups, and establish stable linearizability of translation actions in this context, in Theorem~\ref{thm:lin}. In Section~\ref{sect:quadric}, we provide Springer-type results, reducing stable linearization to $2$-Sylow subgroups. We interpret arguments for special orthogonal groups in terms of the geometry of Pfaffians. 
In Section~\ref{sect:quad3}, we present key examples of stable linearizable actions in dimension 3, and reduce the stable linearization problem to a specific action of the dihedral group $\fD_4$; see Corollary~\ref{coro:final}.

\ 

{\bf Acknowledgments:} 
The first author was partially supported by Simons Foundation Award 546235 and NSF grant 1929284,
the second author by NSF grant 2301983. 
We are grateful to Andrew Kresch for helpful
suggestions about this project.

\section{Groups actions, twists, and rationality}
\label{sect:gen}
Throughout, we work over a base field $k$ that is algebraically closed of characteristic zero.

\subsection{Notions of linearizability}

A $G$-variety is a smooth algebraic variety over $k$ with a generically free action of $G$. Here varieties are assumed to
be geometrically integral.  

We use the following four birational properties of a $G$-action on $X$:
\begin{itemize}
\item{{\em strictly linear} if there exists an equivariant birational map
$$V \stackrel{\sim}{\dashrightarrow} X,$$
where $V$ is a linear representation of $G$;}
\item{{\em linear} if there exists an 
equivariant birational map
$$\bP(V) \stackrel{\sim}{\dashrightarrow} X,$$
where $V$ is a linear representation of $G$;}
\item{{\em stably linear} if $X \times \bP^n$ is linear,
with trivial action on the second factor;}
\item{{\em projectively linear} if there
exists a projective representation of 
$G$ on $\bP(V)$ and an equivariant  birational map
$$
\bP(V) \stackrel{\sim}{\dashrightarrow} X.
$$}
\end{itemize}
In drawing analogies between equivariant geometry and geometry over nonclosed fields, one could view 
(strict, projective) linearizability as analogous to rationality, stable linearizability as analogous to stable rationality, etc.

\begin{rema} \label{rema:NoName}
The No-Name Lemma \cite[4.3]{CGR} implies that if $X \times W$ is
linear for some linear $G$-representation $W$ then 
$X$ is stably linear.
In particular, the notion of {\em strictly stably linear} is 
logically equivalent to the notion of {\em stably linear}.
\end{rema}

\subsection{Projective bundles}
\label{subsect:katsylo}

Our definition of linearizability requires generically
free actions. Without this assumption, the notion
behaves counter-intuitively:
\begin{exam}
Consider the dihedral group
$$G=\mathfrak{D}_4=\left< \sigma,\tau: \sigma^4=\tau^2=e,
\tau\sigma = \sigma^3\tau\right>$$
with representation $V$ given by 
$$\sigma = \left(\begin{matrix} i & 0 \\ 0 & i^3 \end{matrix}
\right),\quad \tau = \left( \begin{matrix} 0 & 1 \\ 1 & 0 \end{matrix}
\right).$$
This does {\em not} act generically freely on
$\bP(V)$ as $\sigma^2$ is trivial; the
quotient has nonzero Amitsur invariant \cite[Section~3.5]{HTNagoya}.
Consider the product $\bP(V) \times \bP^1$ with a trivial
action on the second factor; it also has nonzero Amitsur
invariant, as this is a
stable birational invariant. 
The product is not equivariantly
birational to $\bP(V')$ for any three-dimensional
representation of $G$, which would have trivial Amitsur
invariant.  
\end{exam}

Let $G$ be a finite group and $V$ a $G$-representation
such that the induced action on $\bP(V)$ is generically
free. The quotient map $V \dashrightarrow \bP(V)$ gives
a line bundle
$$\Bl_0(V) \rightarrow \bP(V),$$
so the No-Name Lemma implies $V$ and $\bP(V) \times \bA^1$
are equivariantly birational.  The inclusion $V\subset \bP(V\oplus 1)$ yields that $V$ and
$\bP(V) \times \bP^1$
are linearizable. Induction implies that  $\bP(V) \times \bP^n$ is
as well. 
In each case, $G$ acts trivially on the second factor.

We recall observations of \cite[\S 1]{Katsylo},
retaining the assumptions above:
\begin{itemize}
\item{Suppose $\cE \rightarrow 
\bP(V)$ is a vector bundle of rank $n+1$ with a lifting
of the $G$-action. Then 
the No-Name Lemma implies that 
$\cE$ is birational to $\cO_{\bP(V)}^{n+1}$
over $\bP(V)$, hence $\bP(\cE)$ is
birational to $\bP(V) \times \bP^n$.}
\item{Suppose $X$ is a smooth projective
variety with generically-free $G$-action
and $\cE \ra X$ a vector bundle with $G$-action.  
If $X$ is linearizable then $\bP(\cE)$ is also
linearizable.}
\end{itemize}
Here is an extension of these ideas:
\begin{prop} \label{prop:product}
Let $G$ and $H$ be finite groups
acting generically freely on
$\bP(V)$ and $\bP(W)$ respectively.
Then the induced action of 
$G \times H$ on $\bP(V) \times 
\bP(W)$ is stably linearizable.  
\end{prop}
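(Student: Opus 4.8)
The plan is to exhibit the $G\times H$-representation $V\oplus W$ as, birationally, the total space of a rank-two equivariant vector bundle over $Y:=\bP(V)\times\bP(W)$, and then to invoke the No-Name Lemma. First I would record that $G\times H$ acts generically freely on $Y$: since $G$ acts generically freely on $\bP(V)$ and $H$ on $\bP(W)$, a general point $(p,q)\in Y$ has trivial stabilizer in $G\times H$, so $Y$ is a $(G\times H)$-variety in the sense of Section~\ref{sect:gen}.

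Next I would introduce the rank-two bundle
$$\cE:=\mathrm{pr}_1^*\cO_{\bP(V)}(-1)\oplus \mathrm{pr}_2^*\cO_{\bP(W)}(-1)$$
on $Y$. Each summand is canonically linearized: $\cO_{\bP(V)}(-1)$ is the tautological subbundle of the $G$-representation $V$, with $H$ acting trivially on $\bP(V)$ and along the fibers, and symmetrically for the second summand; so $\cE$ is a $(G\times H)$-equivariant vector bundle, whose fiber at $(p,q)$ is the plane $p\oplus q\subseteq V\oplus W$ (writing $p\subseteq V$, $q\subseteq W$ for the corresponding lines), with $(g,h)$ acting by $(v,w)\mapsto(gv,hw)$. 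Consequently the morphism $\mathrm{Tot}(\cE)\to V\oplus W$, $(p,q;v,w)\mapsto(v,w)$, is $(G\times H)$-equivariant and restricts to an isomorphism over the dense open subset $(V\setminus 0)\times(W\setminus 0)$, with inverse $(v,w)\mapsto([v],[w];v,w)$. Hence $\mathrm{Tot}(\cE)$ is $(G\times H)$-equivariantly birational to the linear representation $V\oplus W$.

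Because the action on $Y$ is generically free, the No-Name Lemma \cite[4.3]{CGR} then applies to $\cE$ and shows that $\mathrm{Tot}(\cE)$ is $(G\times H)$-equivariantly birational to the trivial bundle $Y\times\bA^2$, with trivial action on the second factor. Chaining the two birational maps yields
$$\bigl(\bP(V)\times\bP(W)\bigr)\times\bA^2\ \stackrel{\sim}{\dashrightarrow}\ V\oplus W,$$
so $\bigl(\bP(V)\times\bP(W)\bigr)\times\bA^2$ is equivariantly birational to a representation and hence linear; by Remark~\ref{rema:NoName}, $\bP(V)\times\bP(W)$ is stably linear.

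The one point needing care — more organizational than computational — is that one cannot shortcut via the Katsylo-type observations recalled above, presenting $\bP(V)\times\bP(W)$ as a projective bundle over $\bP(V)$ (or over $\bP(W)$): restricted to either factor, $G\times H$ acts trivially on the relevant base, so that base is not a $(G\times H)$-variety and the No-Name Lemma is unavailable there. Passing to the product $Y$, where the action is generically free, repairs this, and the choice of the external direct sum of tautological line bundles is dictated by the requirement that the total space be birational to an honest representation rather than to an abstract vector bundle. (A mild variant reaches the same conclusion by identifying $\bP(\cE)$ with $\bP(V)\times\bP(W)\times\bP^1$ and $\mathrm{Tot}\bigl(\cO_{\bP(\cE)}(-1)\bigr)$ with $V\oplus W$.)
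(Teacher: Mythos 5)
Your proof is correct and is essentially the paper's argument: the rank-two bundle $\mathrm{pr}_1^*\cO_{\bP(V)}(-1)\oplus \mathrm{pr}_2^*\cO_{\bP(W)}(-1)$ is exactly the equivariant bundle $\cE$ the paper obtains from $\Bl_{\bP(V)\sqcup\bP(W)}\bP(V\oplus W)\to \bP(V)\times\bP(W)$, and both proofs conclude by applying the No-Name Lemma to $\cE$ over the generically free $(G\times H)$-variety $\bP(V)\times\bP(W)$. Your write-up merely makes explicit the tautological birational map $\mathrm{Tot}(\cE)\dashrightarrow V\oplus W$ that the paper leaves implicit, which is a welcome clarification rather than a different route.
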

\begin{proof}
Consider the dominant rational map
$$\bP(V \oplus W) \dashrightarrow
\bP(V) \times \bP(W)$$
which induces the morphism
$$\Bl_{\bP(V) \sqcup \bP(W)}\bP(V \oplus W)
\rightarrow \bP(V) \times \bP(W).$$
This is the projectivization of a rank-two
vector bundle $\cE \ra \bP(V) \times \bP(W)$ that is equivariant for
$G\times H$. 
The $G\times H$-action is generically
free on $\bP(V) \times \bP(W)$
hence $\cE$ is equivariantly birational 
to $\bP(V) \times \bP(W) \times \bA^2$,
with trivial action on the last factor.
Thus $\bP(V) \times \bP(W)$ is stably
linearizable.
\end{proof}
\begin{exam} \label{exam:nolinear}
This is the best possible; we cannot expect linearizability here.  
Fix odd integers $k,\ell$ and consider
the actions of $\mathfrak{D}_{k}$ 
and $C_{\ell}$ on $\bP(V)$ and $\bP(U)$,
where $V$ is the standard two-dimensional representation
of the dihedral group and $U$ has weights
$\zeta$ and $\zeta^{-1}$. 
Here $C_{\ell}=\left< \zeta\right>
\simeq \mu_{\ell}$. 
Keep in mind that
$$\bP(V)\times \bP(U) \hookrightarrow
\bP(V\otimes U)$$
as a quadric surface.
The classification of \cite{DI}
-- see the bottom of page 537 -- 
shows that $\bP(V) \times \bP(U)$
is not birational to $\bP(V')$
for any three-dimensional representation
of $\mathfrak{D}_{k} 
\times C_{\ell}$. 
\end{exam}

\subsection{Notions of versality} \label{subsect:NoV}
We recall the terminology of \cite{DR}, restricting to finite groups:
\begin{itemize}
    \item {\em weakly versal} {\bf (WV)}: for every field $K/k$ and every $G$-torsor $T\to \Spec(K)$ there is a $G$-equivariant $k$-morphism $T\to X$,
    \item {\em versal} {\bf (V)}: every $G$-invariant open $U\subset X$ is weakly versal,
    \item {\em very versal} {\bf (VV)}: there exists a linear representation $G\to \GL(V)$ and a dominant $G$-equivariant rational map $V\to X$,
    \item {\em stably linearizable} {\bf (SL)}: $X\times W$ is equivariantly birational to a linear representation of $G$,
    where $W$ is a linear space with trivial $G$-action. 
\end{itemize}

Note that in the definitions above:
\begin{itemize}
\item{$T$ is viewed as a $k$-scheme;}
\item{we could replace $W$ with a non-trivial linear representation
or the projectivization of a linear representation without
changing the definition.}
\end{itemize}

These notions are related:
$$
\mathbf{(SL)} \Rightarrow 
\mathbf{(VV)} \Rightarrow 
\mathbf{(V)} \Rightarrow 
\mathbf{(WV)}.
$$

\begin{rema}
Not every projectively linear action is very versal. 
Indeed, suppose we are given a projective faithful representation
$$\rho:G \rightarrow \PGL(V)$$
and an associated central extension
\begin{equation}
\label{eqn:seq}
1 \rightarrow \mu_n \rightarrow \widetilde{G} 
\rightarrow G \rightarrow 1
\end{equation}
admitting a linear representation
$$\widetilde{\rho}: \widetilde{G} \rightarrow \GL(V).
$$
By \cite[Proposition 9.1]{DR}, the $G$-action
on $\bP(V)$
is very versal  if and only if 
the exact sequence \eqref{eqn:seq} splits. 
\end{rema}

We recall the notion of a {\em twisting pair}, a tuple $(T,K)$ consisting of a field extension $K/k$ and a $G$-torsor $T$ over $K$; this gives rise to a twist ${}^TX$, the $K$-variety obtained by twisting via $T$. 

The connection between versality and rationality over nonclosed fields is addressed in 
\cite[Theorem 1.1]{DR}: 
\begin{itemize}
    \item {\bf (WV)}  $\Leftrightarrow$ 
${}^TX(K) \neq \emptyset$, for all twisting pairs $(T,K)$,
\item {\bf (V)}  $\Leftrightarrow$ 
${}^TX(K)$ are Zariski dense,
 for all $(T,K)$,
\item {\bf (VV)}  $\Leftrightarrow$ 
${}^TX$ is unirational over $K$,
 for all $(T,K)$,
\item {\bf (SL)}  $\Leftrightarrow$ 
${}^TX$ is stably rational over $K$,
 for all $(T,K)$.
\end{itemize}

\begin{rema}
An  analogous statement regarding rationality over $K$ fails: 
there are examples of stably linearizable but not linearizable quadric surfaces \cite{HTNagoya}. 
\end{rema} 

\section{Quadrics and isotropic subspaces}
\label{sect:quad}

Here we follow \cite[Section 2]{NP}. 
We continue to assume the base field
is algebraically closed of characteristic zero.

Let $(V,\mathsf{q})$ be a non-degenerate quadratic
form invariant under the action of 
a finite group $G$.

\begin{defi} 
An {\em isotropic subspace} $W \subseteq V$
is a $G$-invariant subspace such that
$\mathsf{q}|W=0$. We say $(V,\mathsf{q})$ is {\em anisotropic}
if it has no nonzero isotropic subspaces.
A {\em hyperbolic subspace} of $V$
is a pair of $G$-invariant isotropic subspaces $W, W^{\vee} \subset V$
such that 
$$\mathsf{q}(w,f)=f(w), \quad \text{ for all }
w\in W, f\in W^{\vee}.$$
In other words, the restriction of $\mathsf{q}$ to
$H_W:=W\oplus W^{\vee}$ has matrix
$$\left( \begin{matrix} 0 & I \\
            I & 0 \end{matrix} \right).$$
\end{defi}

Suppose an irreducible representation $W$ of $G$
admits a nonzero invariant quadratic form $\mathsf{q}$.
When $\mathsf{q}$ is non-degenerate, this induces
a $G$-equivariant isomorphism 
\begin{equation}
\label{eqn:ww}
W \stackrel{\sim}{\ra} W^{\vee},
\end{equation}
unique up to scalar.  
\begin{prop} \label{prop:gethyp}
Suppose $(V,\mathsf{q})$ is non-degenerate and
admits an irreducible isotropic subspace
$W$. Then we obtain 
$$(V,\mathsf{q})=(V',\mathsf{q}')\oplus_{\perp} H_W$$
where $(V',\mathsf{q}')$ is non-degenerate.
\end{prop}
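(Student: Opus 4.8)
The plan is to produce the orthogonal complement of $H_W$ inside $(V,\mathsf{q})$ and check that it does the job. First I would use the irreducibility of $W$ together with Schur's lemma to control the restriction $\mathsf{q}|_W \colon W \to W^\vee$ induced by the form (viewing $\mathsf{q}$ as a map $V \to V^\vee$). Since $W$ is isotropic, $W \subseteq W^\perp$, so the composite $W \hookrightarrow V \xrightarrow{\mathsf{q}} V^\vee \twoheadrightarrow W^\vee$ is the zero map; but the image of $W$ in $V^\vee$ under $\mathsf{q}$ is a $G$-submodule isomorphic to $W$ (as $\mathsf{q}$ is nondegenerate and $G$-equivariant), and I want to locate inside $V$ a $G$-stable isotropic complement $W^\vee$ pairing perfectly with $W$.

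Concretely, I would argue as follows. The annihilator $W^\perp \subset V$ is $G$-stable of codimension $\dim W$, and it contains $W$. Pick any $G$-stable complement $C$ to $W^\perp$ in $V$ (Maschke), so $V = W^\perp \oplus C$ with $C \cong (V/W^\perp)$ as $G$-modules; the form gives a perfect $G$-equivariant pairing $W \times C \to k$, hence $C \cong W^\vee$ as $G$-modules and, since $W$ is irreducible, this pairing is essentially unique up to scalar. The subspace $C$ need not itself be isotropic, but I can correct it: replace $C$ by $W^\vee := \{\, c - \tfrac12\,\phi_C(c) \,\}$, where $\phi_C \colon C \to W$ is the self-pairing $\mathsf{q}|_C$ transported through the isomorphism $C \xrightarrow{\sim} W^\vee \xrightarrow{\mathsf{q}_W^{-1}} W$; this $\phi_C$ is $G$-equivariant, so the corrected subspace $W^\vee$ is $G$-stable, still complements $W^\perp$, and a direct computation shows $\mathsf{q}|_{W^\vee} = 0$ and that the pairing $W \times W^\vee \to k$ is still perfect. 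Thus $H_W := W \oplus W^\vee$ is a hyperbolic subspace in the sense of the definition, and $\mathsf{q}|_{H_W}$ is nondegenerate.

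Once $H_W$ is nondegenerate and $G$-stable, set $(V',\mathsf{q}') := (H_W^{\perp}, \mathsf{q}|_{H_W^\perp})$. Nondegeneracy of $\mathsf{q}$ together with nondegeneracy of $\mathsf{q}|_{H_W}$ gives the orthogonal direct sum decomposition $V = H_W \oplus_\perp H_W^\perp$ with $\mathsf{q}'$ nondegenerate, by the standard linear-algebra fact that a nondegenerate form restricted to a nondegenerate subspace splits off its orthogonal complement; $G$-equivariance of everything in sight makes this decomposition $G$-equivariant, which is exactly the claimed identity $(V,\mathsf{q}) = (V',\mathsf{q}') \oplus_\perp H_W$.

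The only genuinely delicate point is the correction step producing the isotropic complement $W^\vee$: one must verify that $\phi_C$ is symmetric in the appropriate sense so that $c \mapsto c - \tfrac12 \phi_C(c)$ really kills the form, and that irreducibility of $W$ is what guarantees the identification $C \cong W^\vee$ is canonical enough to make $\phi_C$ well-defined and $G$-equivariant. (If $W$ were not irreducible, $\mathrm{Hom}_G(W,W^\vee)$ could fail to be a line and one would have less rigidity, though the argument still goes through with more care.) Everything else is routine bookkeeping with Maschke's theorem and orthogonal complements. \qed
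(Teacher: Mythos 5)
Your proof is correct, and its skeleton matches the paper's: produce a $G$-stable complement to $W^{\perp}$ that pairs perfectly with $W$, shear it into an isotropic subspace using $\operatorname{char}(k)\neq 2$, and then split off the orthogonal complement of $H_W$. The substantive difference is how the self-pairing on the complement is killed. The paper writes $\mathsf{q}|(W\oplus W^{\vee})$ in block form $\left(\begin{smallmatrix} 0 & I\\ I & R\end{smallmatrix}\right)$ and invokes irreducibility of $W$, via the uniqueness up to scalar in \eqref{eqn:ww}, to conclude $R=\lambda I$ before clearing it by equivariant row and column operations; your correction $c\mapsto c-\tfrac12\phi_C(c)$ clears an arbitrary $G$-invariant $R$ in one step, so irreducibility is never used --- as you note, your argument proves the statement for any nonzero isotropic $G$-subspace, not just irreducible ones, which is already enough for the inductive proof of the equivariant Witt decomposition (Corollary~\ref{coro:anisotropic}). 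You also make explicit two points the paper leaves implicit: that the complement can be chosen $G$-stable (Maschke) and that nondegeneracy of $\mathsf{q}|H_W$ splits off $H_W^{\perp}$ equivariantly with $\mathsf{q}'$ nondegenerate. One notational slip to fix: there is no form $\mathsf{q}_W$ on $W$ to invert, since $W$ is isotropic; the map $\phi_C\colon C\to W$ should instead be defined through the perfect pairing $W\times C\to k$ (which identifies $C^{\vee}\simeq W$), sending $c$ to the unique $w\in W$ with $\mathsf{q}(w,c')=\mathsf{q}(c,c')$ for all $c'\in C$. With that definition the verification you sketch goes through: $\phi_C$ is $G$-equivariant, the corrected subspace is isotropic (using symmetry of $\mathsf{q}$ and $\mathsf{q}|W=0$), still pairs perfectly with $W$ because $\phi_C(c)\in W$, and still complements $W^{\perp}$.
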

\begin{proof}
Since $\mathsf{q}$ is non-degenerate, there
exists a copy of $W^{\vee} \subset V$
such that 
$$
\mathsf{q}|(W\oplus W^{\vee}) = 
\left( \begin{matrix} 0 & I \\
            I & R \end{matrix} \right)$$
where $R$ is a $G$-invariant quadratic
form on $W^{\vee}$. If $R=0$ then we have our hyperbolic form $H_W$. If $R\neq 0$ then, by \eqref{eqn:ww}, $R=\lambda I$
for some  $0 \neq \lambda \in k$. 
Since $k$ does not have characteristic two,
after equivariant row and column operations
$\mathsf{q}|W\oplus W^{\vee}$ becomes hyperbolic.
\end{proof}
Applying this inductively gives an equivariant version of Witt decomposition:
\begin{coro} \label{coro:anisotropic}
Every non-degenerate $(V,\mathsf{q})$ is equivariantly
equivalent to the orthogonal direct sum of a hyperbolic form and an anisotropic form.
\end{coro}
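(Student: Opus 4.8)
The plan is to induct on the dimension of $V$, with the base case being either the zero form or an anisotropic form, for which there is nothing to prove. For the inductive step, suppose $(V,\mathsf{q})$ is non-degenerate; if it is anisotropic we are done, so assume it admits a nonzero isotropic subspace $W_0$. The first key step is to extract from $W_0$ an \emph{irreducible} isotropic subspace $W$: since $W_0$ is a nonzero $G$-representation it contains an irreducible $G$-subrepresentation $W\subseteq W_0\subseteq V$, and $\mathsf{q}|W=0$ since $\mathsf{q}|W_0=0$. Thus $W$ is an irreducible isotropic subspace of $V$.

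Next I would apply Proposition~\ref{prop:gethyp} to $W\subseteq V$, which produces an equivariant orthogonal decomposition
$$(V,\mathsf{q}) = (V',\mathsf{q}')\oplus_\perp H_W,$$
with $(V',\mathsf{q}')$ non-degenerate and $\dim V' = \dim V - 2\dim W < \dim V$. By the inductive hypothesis applied to $(V',\mathsf{q}')$, it is equivariantly equivalent to an orthogonal direct sum $(V'',\mathsf{q}'')\oplus_\perp H$ of an anisotropic form $(V'',\mathsf{q}'')$ and a hyperbolic form $H$ (possibly zero). Combining, $(V,\mathsf{q})$ is equivariantly equivalent to $(V'',\mathsf{q}'')\oplus_\perp (H\oplus_\perp H_W)$, and $H\oplus_\perp H_W$ is again hyperbolic, so this is the desired decomposition.

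The main subtlety — really the only place requiring care — is making sure the induction is set up so that Proposition~\ref{prop:gethyp}, whose hypothesis demands an \emph{irreducible} isotropic subspace, can always be invoked whenever $(V,\mathsf{q})$ is not anisotropic. This is exactly what the first step above handles: any nonzero isotropic subspace contains an irreducible one, because every nonzero finite-dimensional representation of a finite group in characteristic zero decomposes into irreducibles. With that observation in place the argument is a clean strong induction on $\dim V$, and the only other thing to check is the trivial fact that the orthogonal sum of two hyperbolic forms is hyperbolic (concatenate the isotropic subspaces: $(W\oplus W')\oplus(W^\vee\oplus (W')^\vee)$ has the required block form). I do not anticipate any genuine obstacle beyond correctly organizing these pieces.
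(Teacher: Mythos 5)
Your proof is correct and follows the same route the paper intends: the paper simply says the corollary follows by applying Proposition~\ref{prop:gethyp} inductively, and you have filled in the (implicit) details, notably that any nonzero isotropic $G$-subspace contains an irreducible isotropic subrepresentation by complete reducibility, and that an orthogonal sum of hyperbolic forms is hyperbolic. Nothing further is needed.
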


\begin{prop}
Let $X\subset \bP(W\oplus W^\vee)$ be a quadric hypersurface
associated with a hyperbolic form
$H_W$, where $W$ is a representation of $G$
of dimension $d\ge 2$. If $G$ acts generically freely on $\bP(W)$ 
then $X$ is linearizable. If $G$ acts generically
freely on $X$ then $X$ is stably linearizable.
\end{prop}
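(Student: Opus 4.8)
The plan is to exploit the rational parametrization of a hyperbolic quadric by projectivized tangent/secant data, in the spirit of the projective-bundle observations of Katsylo recalled above. Write $V = W \oplus W^\vee$ with the hyperbolic form $H_W$, so $X = \{ f(w) = 0 \} \subset \bP(V)$ where we think of a point as $[w : f]$ with $w \in W$, $f \in W^\vee$. The linear subspace $\bP(W) \subset X$ is a $G$-invariant isotropic projective subspace of dimension $d-1$. Projection from $\bP(W)$, or rather the incidence correspondence of lines on $X$ meeting $\bP(W)$, realizes $X$ birationally as a projective bundle over $\bP(W)$: given a general point $[w_0 : 0] \in \bP(W)$, the tangent hyperplane section $T_{[w_0:0]} X \cap X$ is a cone, and the lines through $[w_0:0]$ on $X$ sweep out $X$. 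Concretely, I would fix the rational map $X \dashrightarrow \bP(W)$ sending $[w:f] \mapsto [w]$ (defined away from $[0:f]$); its fibers are the linear $\bP^{d-1}$'s of the form $\{[w:f] : f \text{ arbitrary}, f(w)=0\}$ for fixed $[w]$, i.e. affine spaces, so $X$ is birational over $\bP(W)$ to the total space of the $G$-linearized vector bundle $\ker(W^\vee \to \cO_{\bP(W)}(1))$ of rank $d-1$. By the No-Name Lemma this bundle is equivariantly birational to $\bP(W) \times \bA^{d-1}$ with trivial action on the second factor, hence $X$ is equivariantly birational to $\bP(W) \times \bA^{d-1}$.

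From here the two assertions follow quickly. If $G$ acts generically freely on $\bP(W)$, then $\bP(W) \times \bA^{d-1}$ — equivalently, by the discussion of Section~\ref{subsect:katsylo}, $\bP(W) \times \bP^{d-1}$ — is linearizable, because a generically free action on $\bP(W)$ of dimension $d-1 \geq 1$ is stably linearizable after the standard blow-up/No-Name argument; more precisely one observes $\bP(W)\times \bA^{d-1}$ is the total space of a trivial bundle on $\bP(W)$, and one lifts the $G$-action on $\bP(W)$ through $W$ (using that $\bP(W)$ generically free means $W$ can be chosen with $\bP(W)$-action lifted, possibly after replacing $W$ by a tensor twist) to conclude $\bP(W)\times\bA^{d-1}$ is birational to a representation. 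Thus $X$ is linear. If instead $G$ acts generically freely on $X$ but possibly not on $\bP(W)$, then the equivariant birational map $X \dashrightarrow \bP(W)\times\bA^{d-1}$ transports the generically free action, and $X \times \bP^n$ for suitable $n$ becomes the projectivization of a vector bundle over $\bP(W)$ carrying a generically free $G\times\{e\}$-action on the total space; applying Proposition~\ref{prop:product}-style reasoning, or directly the fact that $\bP(\cE)$ over a generically-free base is handled by No-Name, gives stable linearizability of $X$.

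The main obstacle is the second case: when $G$ does not act generically freely on $\bP(W)$, the kernel $N \subset G$ of the $\bP(W)$-action is a nontrivial subgroup (necessarily acting by scalars on $W$, hence on $W^\vee$ by inverse scalars), and one must check that this kernel nonetheless acts faithfully on $X$ in a way compatible with the fibration — i.e. that $N$ acts nontrivially on the fibers $\bA^{d-1}$, which is exactly the hypothesis that $G$ is generically free on $X$. The subtlety is that after passing to $X \times \bP^n$ and rewriting as a projective bundle, one needs the total space action to remain generically free, and one must invoke the stably-linearizable version of the No-Name argument (as in Remark~\ref{rema:NoName} and the bulleted observations above) rather than the naive one. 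I expect the faithfulness bookkeeping for $N$ — tracking how the scalar action on $W$ interacts with the linear structure on the fibers — to be the one genuinely delicate point; everything else is a direct application of the projective-bundle/No-Name machinery already set up.
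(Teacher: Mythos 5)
Your first assertion is handled correctly, and by essentially the paper's argument in mirror image: you project away from $\bP(W^{\vee})$ onto $\bP(W)$ and trivialize the resulting $G$-linearized rank-$(d-1)$ bundle by the No-Name Lemma, whereas the paper projects from $\bP(W)$ onto $\bP(W^{\vee})$; since $G$ acts generically freely on $\bP(W)$ if and only if it does on $\bP(W^{\vee})$, the two are interchangeable. (Your aside about ``replacing $W$ by a tensor twist'' is unnecessary: $W$ is given as a linear representation of $G$, so $\bP(W)\times \bA^{d-1}$ is linear directly by the observations of Section~\ref{subsect:katsylo}.)

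The second assertion is where there is a genuine gap. When $G$ is not generically free on $\bP(W)$, the No-Name Lemma cannot be applied to the bundle over $\bP(W)$, so the equivariant birational map $X \dashrightarrow \bP(W)\times\bA^{d-1}$ with trivial action on the second factor that you propose to ``transport'' simply does not exist --- indeed it cannot, since the kernel $N$ of the $\bP(W)$-action would then act trivially on a model of $X$, contradicting generic freeness on $X$. Multiplying by $\bP^n$ with \emph{trivial} action does not change the kernel of the action on the base, so neither ``Proposition~\ref{prop:product}-style reasoning'' nor ``No-Name over a generically free base'' becomes available; your closing paragraph correctly flags the difficulty but supplies no mechanism to resolve it, and the needed step is not bookkeeping but a new idea: an auxiliary factor with \emph{nontrivial} action. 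The paper observes that $N$ is a central cyclic group $C_{\ell}$ of odd order acting by $(w,f)\mapsto(\zeta w,\zeta^{-1}f)$ (odd because $\zeta=-1$ would give $-\mathrm{id}$ on $W\oplus W^{\vee}$, contradicting generic freeness on $X$), basechanges the projective bundle by $\bP(U)$ with $U$ two-dimensional of weights $\zeta,\zeta^{-1}$, so that the base $\bP(W^{\vee})\times\bP(U)$ now carries a generically free action and is stably linearizable by Proposition~\ref{prop:product}; this yields stable linearizability of $X\times\bP(U)$, and the nontrivial factor $\bP(U)$ is then removed by applying the No-Name Lemma to $X\times U \to X$, using generic freeness on $X$. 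Without this construction your argument does not establish the second statement.
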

We already observed in Example~\ref{exam:nolinear} that we cannot
expect $X$ to be linearizable in general.

\begin{proof}
Consider the linear projection
from $\bP(W)$, which induces
$$\pi_{\bP(W)}:\Bl_{\bP(W)}X \ra \bP(W^{\vee}),
$$
which is a $\bP^{d-1}$ bundle -- indeed,
the projectivization of a vector bundle
with $G$-action. Note that $G$ acts generically
freely on $\bP(W)$ if and only if it acts generically
freely on $\bP(W^{\vee})$.
When $G$ acts generically freely on
$\bP(W^{\vee})$ then the observations of
Section~\ref{subsect:katsylo}
give that $X$ is linearizable.
Suppose that $G$ fails to acts generically
freely on $\bP(W)$.
Since it {\em does} act generically freely
on $X \subset \bP(W\oplus W^{\vee})$,
we conclude there is a cyclic
central subgroup
$$C_{\ell}= \left< \zeta \right> \subset G, \quad \ell \text{ odd, } \zeta^{\ell}=1,$$
acting via
\begin{align*}
C_{\ell} \times (W\oplus W^{\vee})
& \ra W \oplus W^{\vee} \\
\zeta\cdot(w,f) & \mapsto (\zeta w, \zeta^{-1}f).
\end{align*}
Let $U$ be the two-dimensional representation of $C_{\ell}$
with weights $\zeta$ and $\zeta^{-1}$. Take the basechange
of $\pi_{\bP(W)}$
$$(\Bl_{\bP(W)}X) \times \bP(U) \ra \bP(W^{\vee}) \times \bP(U)$$
which remains the projectivization of a $G$-equivariant
vector bundle; now the base
has a generically free action of $G$ on the base.
Since $\bP(W^{\vee}) \times \bP(U)$ is stably linearizable
by Proposition~\ref{prop:product}, $X \times \bP(U)$ is as well.
An application of the No-Name Lemma, as in
Section~\ref{subsect:katsylo}, implies that $X$ is
stably linearizable.
\end{proof}
The same projection argument yields
the following:
\begin{theo} \label{theo:isotropicSL}
Let $(V,\mathsf{q})$ be a non-degenerate quadratic
form invariant under the action of 
a finite group $G$. Let 
$$X=\{\mathsf{q}=0\} \subset \bP(V)$$
and assume that $\dim(X)>0$.
Let $0 \neq W\subset V$ be an
isotropic subspace for $\mathsf{q}$ 
and 
$$H_W\simeq W \oplus W^{\vee} \subseteq V$$
the hyperbolic subspace 
guaranteed by Proposition~\ref{prop:gethyp}.
If $G$ acts generically freely on 
$\bP(W^{\vee} \oplus H_W^{\perp})$ 
then $X$ is linearizable. If $G$ acts generically
freely on $X$ then $X$ is stably linearizable.
\end{theo}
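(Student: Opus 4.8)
The plan is to imitate the projection argument from the preceding proposition, now projecting from a larger center. Write $V = H_W \oplus H_W^\perp$ with $H_W = W \oplus W^\vee$, so $\bP(V) = \bP(W \oplus W^\vee \oplus H_W^\perp)$, and consider the linear projection away from the linear subspace $\bP(W) \subset \bP(V)$. Because $W$ is isotropic, $\bP(W)$ lies on $X$, and the projection restricted to $X$ induces a morphism
$$
\pi_{\bP(W)} : \Bl_{\bP(W)} X \longrightarrow \bP(W^\vee \oplus H_W^\perp).
$$
First I would check that this is the projectivization of a $G$-equivariant vector bundle of rank $\dim W$ over the base $\bP(W^\vee \oplus H_W^\perp)$: the fibre over a point $[\ell]$ is the space of lines through a point of $\bP(W)$ meeting $X$ along the residual intersection, and the hyperbolic shape $\left(\begin{smallmatrix} 0 & I \\ I & 0\end{smallmatrix}\right)$ of $\mathsf{q}|H_W$ makes the equation linear in the $W$-coordinates once the $\bP(W^\vee \oplus H_W^\perp)$-coordinates are fixed, exactly as in the $d$-dimensional hyperbolic case. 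The key point is that the base here is $\bP(W^\vee \oplus H_W^\perp)$, precisely the space appearing in the hypothesis.

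Next, if $G$ acts generically freely on $\bP(W^\vee \oplus H_W^\perp)$, then the observations of Section~\ref{subsect:katsylo} apply verbatim: the No-Name Lemma makes the equivariant bundle birational to a trivial one, so $\Bl_{\bP(W)} X$, hence $X$, is equivariantly birational to $\bP(W^\vee \oplus H_W^\perp) \times \bP^{\dim W - 1}$, and since the base is a projectivized representation on which $G$ acts generically freely, $X$ is linearizable. For the second statement, suppose $G$ acts generically freely on $X$ but not on $\bP(W^\vee \oplus H_W^\perp)$. Then, as in the previous proof, the kernel of the action on the base must be a central cyclic subgroup $C_\ell = \langle \zeta \rangle$ of odd order acting by $\zeta \cdot (w,f,u) = (\zeta w, \zeta^{-1} f, u)$ on $W \oplus W^\vee \oplus H_W^\perp$ — acting trivially on $\bP(W^\vee \oplus H_W^\perp)$ forces triviality on $\bP(W^\vee)$ and on $\bP(H_W^\perp)$ separately, and generic freeness on $X$ then pins down the weights on $W$ up to the displayed normalization. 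Letting $U$ be the two-dimensional $C_\ell$-representation with weights $\zeta, \zeta^{-1}$, I would base-change $\pi_{\bP(W)}$ along $\bP(W^\vee \oplus H_W^\perp) \times \bP(U) \to \bP(W^\vee \oplus H_W^\perp)$: this stays a projectivized $G$-equivariant vector bundle, and now $G$ acts generically freely on the base. By Proposition~\ref{prop:product} applied to $G$ acting on $\bP(W^\vee \oplus H_W^\perp)$ and $C_\ell$ acting on $\bP(U)$ — viewing the relevant group as the image of $G$ in the product of the two automorphism groups — the base $\bP(W^\vee \oplus H_W^\perp) \times \bP(U)$ is stably linearizable, so $X \times \bP(U)$ is, and a final application of the No-Name Lemma gives that $X$ is stably linearizable.

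The step I expect to require the most care is verifying the two claims about the kernel of the action on $\bP(W^\vee \oplus H_W^\perp)$: first, that failure of generic freeness there, combined with generic freeness on $X$, really does force a \emph{central cyclic of odd order} acting with the stated weights (one must rule out the kernel being larger or acting with different weights on the various summands, using that $X$ has positive dimension and that $\mathsf{q}$ pairs $W$ with $W^\vee$ non-trivially), and second, that Proposition~\ref{prop:product} applies to the quotient group actually acting faithfully on the two factors rather than to a literal direct product. Both are mild extensions of the arguments already given for the hyperbolic quadric, but they are where the bookkeeping lives; once they are in place the rest is the standard projective-bundle and No-Name machinery of Section~\ref{subsect:katsylo}.
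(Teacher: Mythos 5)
Your overall strategy---project from $\bP(W)$, exhibit $\Bl_{\bP(W)}X \to \bP(W^{\vee}\oplus H_W^{\perp})$ as the projectivization of a $G$-equivariant rank-$\dim W$ bundle, and then run the No-Name/Katsylo machinery, with the $C_\ell$--$\bP(U)$ trick in reserve for the stable clause---is exactly the paper's, which disposes of the theorem with the phrase ``the same projection argument.'' The linearizability clause is correct as you present it.

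However, the step you yourself flag as the delicate one is wrong as written. An element in the kernel of the $G$-action on $\bP(W^{\vee}\oplus H_W^{\perp})$ acts by a \emph{single} scalar $\mu$ on all of $W^{\vee}\oplus H_W^{\perp}$; your deduction that projective triviality only forces triviality on $\bP(W^{\vee})$ and on $\bP(H_W^{\perp})$ separately is too weak, and the weights $(\zeta w,\zeta^{-1}f,u)$ you write down do not define an element of that kernel unless $\zeta=1$ or $H_W^{\perp}=0$. The correct analysis is a dichotomy. If $H_W^{\perp}\neq 0$, then $\mu$ preserves the non-degenerate form $\mathsf{q}|H_W^{\perp}$, so $\mu=\pm1$; compatibility with the pairing between $W$ and $W^{\vee}$ (both $G$-invariant) forces the element to be $\mu\,\mathrm{id}_V$, which acts trivially on $\bP(V)$ and hence on $X$, so generic freeness on $X$ makes it trivial. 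Thus for $H_W^{\perp}\neq 0$ the kernel is trivial, generic freeness on $X$ already yields generic freeness on the base, and the stable clause follows from the first clause (indeed $X$ is then linearizable). The case where the $C_\ell$--$\bP(U)$ argument is genuinely needed is exactly $H_W^{\perp}=0$, i.e., the preceding hyperbolic proposition, where your (and the paper's) description of the kernel is correct. So your proof reaches a true conclusion, but the exceptional case is mischaracterized rather than shown to be either empty or the already-settled proposition; and, as a smaller point shared with the paper's own argument, if you invoke Proposition~\ref{prop:product} for a single group $G$ mapping to both factors, you must still specify a $G$-action on $\bP(U)$ (e.g.\ replace $U$ by a $G$-representation whose restriction to the central $C_\ell$ separates it), since a character of a central subgroup need not extend to $G$.
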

When $\dim(W)=1$ the projection is birational, and we obtain the well-known
\begin{coro} \label{coro:fix}
Retain the notation of
Theorem~\ref{theo:isotropicSL}.
If $X$ has a fixed point then $X$ is linearizable.
\end{coro}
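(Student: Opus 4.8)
The plan is to deduce Corollary~\ref{coro:fix} directly from Theorem~\ref{theo:isotropicSL} by specializing to the case $\dim(W)=1$. First I would observe that a fixed point $x\in X$ corresponds to a $G$-invariant line $W\subset V$ with $\mathsf{q}|W=0$, i.e., $W$ is an isotropic subspace of dimension one; this is the hypothesis of the theorem with the smallest possible $W$. The nontrivial point is that the hyperbolic subspace $H_W=W\oplus W^\vee$ produced by Proposition~\ref{prop:gethyp} is then two-dimensional, and its orthogonal complement $H_W^\perp$ carries the anisotropic-or-not remainder $(V',\mathsf{q}')$.

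Next I would verify the genericity hypothesis of Theorem~\ref{theo:isotropicSL}, namely that $G$ acts generically freely on $\bP(W^\vee\oplus H_W^\perp)$. Since $W^\vee\oplus H_W^\perp$ is a linear complement to $W$ inside $V$, we have $\bP(W^\vee\oplus H_W^\perp)=\bP(V/W)$, which is the image of the linear projection from the point $x=\bP(W)$. Because $G$ already acts generically freely on $X$ (this is built into the definition of a $G$-variety), and $\dim(X)>0$ so $X$ is not the single point $x$, the projection $\Bl_x X\to \bP(V/W)$ is birational and $G$-equivariant; a generically free action on the source forces a generically free action on the target. Hence the hypothesis of the theorem is automatically satisfied in this case, and we land in the first conclusion: $X$ is linearizable.

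The main obstacle, such as it is, is simply making precise the remark in the statement ``When $\dim(W)=1$ the projection is birational'': one must check that projection from a smooth point of a quadric hypersurface of positive dimension is a birational map onto $\bP^{\dim X}$, with the blow-up of that point resolving the indeterminacy, and that this entire picture is $G$-equivariant because $x$ is $G$-fixed. Once that is in place, the corollary is immediate. Since this birationality of projection from a point on a quadric is classical and the equivariance is formal, I expect no real difficulty; the proof is essentially a one-line reduction to the theorem.

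\begin{proof}
A fixed point of $X$ gives a $G$-invariant line $W\subset V$ on which $\mathsf{q}$ vanishes, hence an isotropic subspace with $\dim(W)=1$. Let $H_W=W\oplus W^\vee$ be the hyperbolic subspace of Proposition~\ref{prop:gethyp}; then $W^\vee\oplus H_W^\perp$ is a $G$-invariant linear complement to $W$ in $V$, so $\bP(W^\vee\oplus H_W^\perp)=\bP(V/W)$. Linear projection from the point $\bP(W)\in X$ induces a $G$-equivariant morphism
$$
\Bl_{\bP(W)}X \ra \bP(V/W)
$$
which, since $\dim(X)>0$ and $\bP(W)$ is a smooth point of the quadric $X$, is birational. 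As $G$ acts generically freely on $X$, it therefore acts generically freely on $\bP(W^\vee\oplus H_W^\perp)$, and Theorem~\ref{theo:isotropicSL} applies to give that $X$ is linearizable.
\end{proof}
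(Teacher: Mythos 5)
Your proof is correct and takes essentially the same route as the paper, whose entire argument is the one-line observation that when $\dim(W)=1$ the projection from the fixed point is an equivariant birational map onto $\bP(V/W)\simeq \bP(W^{\vee}\oplus H_W^{\perp})$, the projectivization of a representation. The only difference is cosmetic: the paper concludes linearizability directly from this birational map, while you additionally verify the generic-freeness hypothesis so as to invoke Theorem~\ref{theo:isotropicSL} as a black box --- a harmless (and unnecessary) detour.
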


In light of Theorem~\ref{theo:isotropicSL}
and Corollary~\ref{coro:anisotropic},
the natural question for future study
is the stable birational classification
of anisotropic quadrics, i.e.,
$$X=\{\mathsf{q}=0 \} \subset \bP(V),
$$
where 
\begin{equation}\label{eq:orth}
(V,\mathsf{q}) = (V_1,\mathsf{q}_1) 
\oplus_{\perp} \cdots \oplus_{\perp}
(V_r,\mathsf{q}_r)
\end{equation}
is an orthogonal direct sum of 
self-dual irreducible representations
of $G$ with their distinguished quadratic forms. 

There are only finitely many possibilities
to consider, thanks to our next result:
\begin{prop} \label{prop:mult}
Suppose that $\mathsf{q}$ is direct
sum of irreducible non-degenerate 
quadratic forms as in (\ref{eq:orth}). 
If $\mathsf{q}$ is anisotropic then the 
factors $(V_i,\mathsf{q}_i)$ are not isomorphic.
\end{prop}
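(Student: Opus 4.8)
The plan is to argue by contradiction: suppose $(V_i,\mathsf{q}_i)\simeq (V_j,\mathsf{q}_j)$ for some $i\neq j$ and exhibit a nonzero isotropic subspace inside $(V_i,\mathsf{q}_i)\oplus_\perp (V_j,\mathsf{q}_j)$, hence inside $(V,\mathsf{q})$, contradicting anisotropy. So first I would reduce to the case $V=V_i\oplus_\perp V_j$ with $(V_i,\mathsf{q}_i)$ and $(V_j,\mathsf{q}_j)$ two isomorphic copies of a single self-dual irreducible $(W,\mathsf{q}_W)$; fix a $G$-equivariant isomorphism $\phi\colon (V_i,\mathsf{q}_i)\xrightarrow{\sim}(V_j,\mathsf{q}_j)$, which exists by hypothesis and which (since $W$ is irreducible) scales $\mathsf{q}_i$ to $\mathsf{q}_j$ up to a nonzero constant, absorbed into $\phi$.

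The key step is then to write down the isotropic subspace explicitly. Consider the graph-type subspace
$$
W_+ = \{\, (w, \sqrt{-1}\,\phi(w)) : w\in V_i \,\} \subseteq V_i\oplus_\perp V_j.
$$
This is $G$-invariant because $\phi$ is equivariant, it is nonzero of dimension $\dim W\ge 1$, and for $(w,\sqrt{-1}\,\phi(w))$ we compute
$$
\mathsf{q}\big(w,\sqrt{-1}\,\phi(w)\big) = \mathsf{q}_i(w) + \mathsf{q}_j\big(\sqrt{-1}\,\phi(w)\big) = \mathsf{q}_i(w) - \mathsf{q}_j\big(\phi(w)\big) = 0,
$$
using that $\phi$ is an isometry and that $k$ is algebraically closed (so $\sqrt{-1}\in k$, and here we again use $\mathrm{char}\,k\neq 2$ so that $\sqrt{-1}\neq \pm 1$ genuinely and the cross term behaves). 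Thus $W_+$ is a nonzero $G$-invariant isotropic subspace of $(V,\mathsf{q})$, contradicting the assumption that $\mathsf{q}$ is anisotropic. (Equivalently, one may phrase this as: the orthogonal sum of a form with itself always contains the hyperbolic plane, equivariantly, via the diagonal and anti-diagonal copies $\{(w,\pm\sqrt{-1}\phi(w))\}$ — this is the equivariant avatar of the classical fact that $H\oplus H$ is hyperbolic.)

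I do not expect a serious obstacle here; the only point requiring a little care is the reduction at the start — making sure that an abstract $G$-isomorphism $V_i\simeq V_j$ can be promoted to one respecting the distinguished quadratic forms. This follows because on an irreducible self-dual representation the invariant quadratic form is unique up to scalar (the space of invariant symmetric bilinear forms is one-dimensional, by Schur), so any equivariant isomorphism automatically rescales $\mathsf{q}_i$ to a multiple of $\mathsf{q}_j$, and that multiple can be cleared by composing with a scalar. Once that is in hand the graph construction above finishes the proof immediately.
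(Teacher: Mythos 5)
Your proof is correct and is essentially the paper's own argument: the paper also normalizes the identification of the repeated factor by a rescaling (your Schur-lemma step) and then exhibits the isotropic subspace as the image of $v \mapsto (v, iv)$, which is exactly your graph $\{(w,\sqrt{-1}\,\phi(w))\}$.
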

\begin{proof}
Suppose a summand, say $(V_1,\mathsf{q}_1)$,
appears with multiplicity. After
rescaling if necessary, we obtain
$$(V_1,\mathsf{q}_1) \oplus_{\perp}
(V_1,\mathsf{q}_1) \subset (V,\mathsf{q}).$$
This contains an isotropic subspace -- the image
of $V_1$ under $v \mapsto (v,iv)$ --
contradicting the assumption that $(V,\mathsf{q})$ is anisotropic.
\end{proof}

\section{Flag varieties and special groups}
\label{sect:flag}

Here we consider natural actions of finite groups on homogeneous
spaces for special classes of algebraic groups, with a view
toward stable linearizability.
The most fundamental construction is the No-Name Lemma mentioned
in Remark~\ref{rema:NoName}.  We will apply it freely as we
present further applications below. 

An algebraic group $\sG$ over $k$ is called {\em special} if
$$
\rH^1(K,\sG) =0, \quad \forall K/k.
$$
Examples (listed in \cite[Section 4.2]{CT-inv}) include
\begin{itemize}
\item split connected solvable groups,
\item $\GL_n$, $\SL_n$, $\mathsf{Sp}_{2n}$, split $\mathsf{Spin}_n$, with $n\le 6$.
\end{itemize}
By \cite[p. 26]{Serre1958}, the only special semisimple groups 
over $k$ are products of $\mathsf{SL}_n$ and $\mathsf{Sp}_n$. 

\begin{prop}
Let $\sG$ be a special connected linear algebraic group and $G\subset \sG$ a finite subgroup. 
Then the translation action of $G$ on $\sG$ is stably linearizable.
\end{prop}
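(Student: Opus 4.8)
The plan is to exploit the fact that $\sG$ is a special group to present $\sG$ itself, as a $G$-variety under left translation, as a Zariski-locally trivial fibration (in fact, a vector bundle up to birational modification) over a linearizable base. The key input is the structure theory of $\sG$: choose a Borel subgroup $\rB\subset \sG$ and a Levi decomposition $\rB = \rT\ltimes \rU$ with $\rT$ a maximal torus and $\rU$ the unipotent radical. Since $\sG$ is special and connected, it is a product of factors $\SL_n$ and $\Sp_{2n}$ (together with the split solvable pieces allowed in the list), and for such groups the quotient map $\sG\to \sG/\rB$ is a Zariski-locally trivial $\rB$-bundle; moreover $\sG/\rB$ is itself a tower of projective bundles, hence rational, and the left $G$-action descends to it.

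First I would reduce to understanding the $G$-action on the flag variety $\sG/\rB$: because the left translation action of $G$ commutes with right multiplication by $\rB$, the projection $\sG\to\sG/\rB$ is $G$-equivariant, and since $\sG$ is special this bundle is Zariski-locally trivial with fiber $\rB\cong \bA^N$ (as a variety, $\rT\cong\bG_m^r$ and $\rU\cong\bA^m$, so $\rB$ is rational of dimension $N$). Next I would argue that, after a $G$-equivariant birational modification, $\sG\dashrightarrow \sG/\rB$ becomes the projectivization-free total space of a $G$-equivariant vector bundle over $\sG/\rB$ — equivalently, that $\sG$ is $G$-equivariantly birational to $\sG/\rB\times\bA^N$ with trivial action on the second factor, by the No-Name Lemma in the form recalled in Remark~\ref{rema:NoName} and Section~\ref{subsect:katsylo} (a generically free action on the base, together with a $G$-linearized bundle, forces the bundle to be equivariantly birational to a trivial one). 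Then I would show the base $\sG/\rB$ is itself stably linearizable as a $G$-variety: realize it as an iterated projective bundle over a point (the Bruhat/Schubert cell structure gives such a tower for $\SL_n$ and $\Sp_{2n}$), to which the observations of Section~\ref{subsect:katsylo} and Proposition~\ref{prop:product} apply, yielding that $\sG/\rB$ — and hence $\sG$ — is stably linearizable; finally I would pass back from $X\times\bP^m$ to $X$ using the No-Name Lemma once more.

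The main obstacle is the middle step: verifying that the left $G$-action on the base $\sG/\rB$ is \emph{generically free} (so that the No-Name Lemma applies to strip off the fiber) and that the Schubert tower can be made $G$-equivariant. Generic freeness on $\sG/\rB$ can fail a priori — elements of $G$ might act trivially on the flag variety while acting nontrivially on $\sG$ — so I expect to need an auxiliary twist: take a faithful linear representation $V$ of $G$ and work with $\sG/\rB\times\bP(V)$, whose $G$-action is automatically generically free, then run the bundle argument over this enlarged base and absorb the extra factor at the end (this is exactly the maneuver used in the proof of the previous Proposition, where $\bP(U)$ was introduced to restore generic freeness). The tower structure itself is standard Bruhat theory and requires no new ideas once the group is known to be a product of $\SL$'s and $\Sp$'s; the only care needed is that each successive projective bundle carries a compatible $G$-linearization, which follows because the relevant line bundles on partial flag varieties are $\sG$-equivariant, a fortiori $G$-equivariant.
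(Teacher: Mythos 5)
Your route---trivialize $\sG \rightarrow \sG/\sB$ over the flag variety and then linearize $\sG/\sB$ via its Schubert tower---is genuinely different from the paper's proof, which uses no structure theory at all: there one fixes an embedding $G \hookrightarrow \GL_n$, compares the two projections of $\GL_n \times \sG$ with the diagonal action, and invokes speciality directly to produce a section of the $\sG$-direction over the function field of $\GL_n$, so that the translation action on $\sG$ becomes stably $G$-birational to the tautologically linearizable action on $\GL_n$. What you propose is essentially the machinery the paper develops afterwards (Proposition~\ref{prop:flag} and Theorem~\ref{thm:lin}) run in reverse; it can be made to work, but as written it has genuine gaps. The main one is the first reduction: $\sG \rightarrow \sG/\sB$ is a torsor under $\sB = \sT \ltimes \sU$, not a $G$-linearized vector bundle, so the No-Name Lemma does not apply to it as you invoke it---the assertion that it is ``a vector bundle up to birational modification'' with fiber $\bA^N$ and trivial action is exactly what has to be proved, and the torus directions are the whole issue. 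The repair is the argument of Proposition~\ref{prop:flag}: factor through $\sG/\sU \rightarrow \sG/\sB$, identify this $\sT$-torsor with a fiber product of complements of zero sections of the associated line bundles $\sG \times^{\sB} k_{\chi}$ for a basis of characters $\chi$ of $\sT$ (these are $G$-linearized by construction), apply No-Name to each of them and to the $\sU$-step, all after restoring generic freeness on the base with your auxiliary $\bP(V)$ factor.

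Second, your structural input is false as stated: a special connected group need not be a product of $\SL_n$'s, $\Sp_{2n}$'s and a split solvable group ($\GL_n$ already is not such a product), so the claim that $\sG/\sB$ is a $G$-equivariant tower of projectivized vector bundles whose bottom stage is the projectivization of an \emph{honest} linear representation of $G$ requires an argument for a general special $\sG$, whose reductive quotient is only an almost-direct product. This is precisely where speciality must enter your argument, and your write-up does not isolate it: for $\sG = \PGL_2$ every step of your outline through the trivialization goes through (the relevant line bundle is $\omega_{\bP^1}$, which is $\PGL_2$-linearized), and the conclusion fails only because the bottom $\bP^1$ is not the projectivization of a $G$-representation when the Amitsur invariant is nontrivial (cf.\ Remark~\ref{rema:sometimes}); an argument that does not see this distinction is not yet a proof. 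A minor further point: Proposition~\ref{prop:product} is stated for $G \times H$-actions, while at the bottom of your tower you need the diagonal single-group version for $\bP(W)\times\bP(V)$; the proof adapts, but the citation does not apply verbatim.
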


This generalizes \cite[Proposition 3.7]{BT-unram}.

\begin{proof}
We are following the strategy of \cite[Prop.~4.9]{CT-inv}.  Choose a representation
$G \hookrightarrow \GL_n$ for some $n$. Note that $\sG$ is rational over $k$.  Consider the diagonal
embedding
$$
G\hookrightarrow \GL_n \times \sG
$$
and the projection onto $\sG$. We claim that this is $G$-birational to
$\GL_n \times \sG$, with trivial action on the first factor.  
Indeed, a finite group action on $\GL_n$ is tautologically linearizable
over any field.  Since $G$ acts generically freely on $\sG$, the No-Name
Lemma implies the desired birational map.  

On the other hand, consider the projection 
$$\pi_1: \GL_n \times \sG \rightarrow \GL_n.$$ 
This is a torsor
for $\sG$ in the sense that it admits a section $s$ after basechange
$$G \times \GL_n \rightarrow  \GL_n,$$
namely, 
$$s(\gamma,g) \mapsto (\gamma \cdot 1_{\sG}, \gamma \cdot g).$$
However, the speciality assumption implies that there is a 
section even over the function field of $\GL_n$. Thus
$\sG \times \GL_n$ with the diagonal action of $G$
is equivariantly birational to $\sG \times \GL_n$, 
with trivial action on the first factor. 

Putting this together, we find that $\sG$ and $\GL_n$ are stably 
birational as $G$-varieties. It follows that $\sG$ is stably
linearizable.  
\end{proof}

\begin{coro}
Let $\sG$ be special and $\sU \subset \sG$ a unipotent subgroup.
Fix a finite subgroup $G \subset \sG$. Then the induced left action on 
$\sG/\sU$ is stably linearizable.
\end{coro}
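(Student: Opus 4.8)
The plan is to reduce the statement about $\sG/\sU$ to the previous proposition about $\sG$ itself, using the fact that a unipotent group is split solvable and hence special, and in particular that the quotient map $\sG \to \sG/\sU$ is a Zariski-locally-trivial $\sU$-torsor. First I would observe that $G$ acts generically freely on $\sG/\sU$: since $G$ acts on $\sG$ by left translation and $\sU$ acts by right translation, these actions commute, so the $G$-action descends to $\sG/\sU$, and generic freeness follows because a nontrivial $g \in G$ fixing a coset $h\sU$ would satisfy $gh \in h\sU$, i.e. $h^{-1}gh \in \sU$; but $h^{-1}gh$ has finite order and the only finite-order element of a unipotent group in characteristic zero is the identity, contradiction. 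So $\sG/\sU$ is a genuine $G$-variety.

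Next I would consider the projection $\sG \to \sG/\sU$, which is $G$-equivariant (for the left-translation actions on both) and is a torsor under the unipotent group $\sU$. Since $\sU$ is split solvable it is special, so this torsor is Zariski-locally trivial; in fact, being unipotent in characteristic zero, $\sU$ is isomorphic as a variety to affine space, and the torsor has a section over the function field of $\sG/\sU$. Hence $\sG$ is $G$-equivariantly birational to $(\sG/\sU) \times \sU$ with trivial $G$-action on the $\sU$-factor. Since $\sU \cong \bA^{\dim \sU}$ as a variety, this exhibits $\sG$ as stably birational, as a $G$-variety, to $\sG/\sU$. Alternatively, and more cleanly, one applies the No-Name Lemma directly: the map $\sG \to \sG/\sU$ becomes, after the above, a vector bundle (or affine-space bundle) with $G$-action over $\sG/\sU$, and since $G$ acts generically freely on the base, this bundle is $G$-birational to the trivial one.

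Then I would invoke the preceding proposition, which gives that $\sG$ with its left-translation $G$-action is stably linearizable. Combining: $\sG/\sU$ is stably $G$-birational to $\sG$, which is stably linearizable, so $\sG/\sU$ is stably linearizable as well. Concretely, if $\sG \times \bP^m \sim_G \bP(V)$ and $\sG \sim_G (\sG/\sU) \times \bA^N$, then $(\sG/\sU) \times \bA^N \times \bP^m$ is linear, and by Remark~\ref{rema:NoName} (the No-Name Lemma, or rather the equivalence of stable and strictly stable linearizability) this forces $\sG/\sU$ itself to be stably linearizable.

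The main obstacle, such as it is, lies in verifying that the $\sU$-torsor $\sG \to \sG/\sU$ is birationally trivial as a $G$-equivariant object — one needs the section over the function field to be compatible with the $G$-action, which is automatic here because the $G$-action (left translation) and the $\sU$-action (right translation) commute, so the whole argument of the previous proof (where $\GL_n$ played the role of the base) goes through with $\sG/\sU$ in place of $\GL_n$ and $\sU$ in place of $\sG$; the speciality of $\sU$ is what supplies the rational section. The only genuinely new input beyond the previous proposition is the generic freeness check and the remark that unipotent groups in characteristic zero are special and rational, which is standard.
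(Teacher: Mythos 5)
Your argument is essentially the paper's: there, generic freeness follows from $G\cap\sU=\{1\}$ (unipotent groups are torsion-free in characteristic zero), the projection $\sG\to\sG/\sU$ is trivialized equivariantly via speciality of $\sU$ and the No-Name Lemma to give $\sG\sim_G \sU\times(\sG/\sU)$ with trivial action on the first factor, and stable linearizability of $\sG/\sU$ is deduced from that of $\sG$. Your remark that the function-field section is ``automatically'' $G$-equivariant because left and right translations commute is a bit glib as stated (one should descend the $\sU$-torsor to the $G$-quotient, or simply argue via the equivariant No-Name Lemma), but since your cleaner alternative is exactly the paper's No-Name route, the proof is correct and coincides with the paper's.
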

\begin{proof}
Since $\sU$ is unipotent and the characteristic is zero, $G\cap \sU = \{1\}$
and $G$ acts generically freely on $\sG/\sU$.  
Consider the projection
$$\sG \rightarrow \sG/\sU,$$
which is a vector bundle as the group $\sU$ is special \cite[Section~4.2]{CT-inv}.
The No-Name Lemma implies $\sG$ is $G$-birational to the product
$$\sU \times (\sG/\sU)$$
with $G$ acting trivially on the first factor. 
Since the $G$-action on $\sG$ is stably linearizable the same holds for the quotient.  
\end{proof}

\begin{rema}
\label{rema:sometimes}
Even when $\sG$ is not special, one can sometimes establish the linearizability of the translation action. For example, $\sG=\mathsf{PGL}_2$ admits an equivariant compactification to $\bP^3$, and the translation action extends as a projectively linear action on $\bP^3$. 
The obstruction to linearizability of this action is captured by the Amitsur invariant (see, e.g.~\cite[Section~3.5]{HTNagoya}).
\end{rema}

\begin{prop} \label{prop:flag}
Let $\sG$ be special and $\sB \subset \sG$ a Borel subgroup. Let 
$G\subset \sG$ be a finite subgroup such that the induced left action on the quotient $\sG/\sB$ is generically free. Then this action 
is stably linearizable.
\end{prop}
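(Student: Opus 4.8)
The plan is to reduce the statement about the flag variety $\sG/\sB$ to the two cases already handled in this section, namely the translation action on $\sG$ itself and the quotient by a unipotent group. First I would use the structure theory of the Borel subgroup: since $\sG$ is connected, $\sB = \sT \cdot \sU$ where $\sT$ is a maximal torus of $\sB$ and $\sU$ is its unipotent radical, with $\sU$ normal in $\sB$. The key point is that for a \emph{special} group $\sG$, the unipotent radical $\sU$ is special (it is split unipotent, hence an iterated extension of $\bG_a$'s, each of which is special), so the projection $\sG/\sU \to \sG/\sB$ is a torsor under $\sB/\sU \simeq \sT$, which is a split torus, hence again special. Thus both maps in the tower $\sG \to \sG/\sU \to \sG/\sB$ are nice: the first is a vector bundle (as in the corollary above), and the second, being a torsor under a split torus, is Zariski-locally trivial — in fact, after an equivariant birational modification, it is a product.

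The key steps, in order: (1) verify that $G$ acts generically freely on $\sG/\sU$ — this follows because $\sU \cap G = \{1\}$ in characteristic zero (as in the corollary), and more is needed to pass to $\sG/\sB$: actually we are \emph{given} that the action on $\sG/\sB$ is generically free, and since $\sG/\sU \to \sG/\sB$ is surjective with the $G$-action compatible, generic freeness propagates upward along the fibration to $\sG/\sU$ and then to $\sG$. (2) Apply the No-Name Lemma to the vector bundle $\sG \to \sG/\sU$ to conclude $\sG$ is $G$-equivariantly birational to $(\sG/\sU) \times \sU$ with trivial action on $\sU$; since $\sG$ is stably linearizable by the first proposition of this section, so is $\sG/\sU$. (3) Analyze the $\sT$-torsor $\sG/\sU \to \sG/\sB$: because $\sT$ is a split torus and $G$ acts generically freely on the base $\sG/\sB$, the torsor becomes equivariantly trivial over the function field of $\sG/\sB$ — here one invokes Hilbert 90 (vanishing of $\rH^1(K,\bG_m)$ for the torus, i.e. speciality of split tori) exactly as in the proof that translation on a special $\sG$ is stably linearizable. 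Hence $\sG/\sU$ is $G$-equivariantly birational to $(\sG/\sB) \times \sT$ with trivial action on $\sT$, and since $\sT$ is a split torus it is equivariantly birational to an affine space with trivial action. (4) Combine: $\sG/\sB$ is stably birational as a $G$-variety to $\sG/\sU$, which is stably linearizable, so $\sG/\sB$ is stably linearizable.

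The main obstacle I expect is step (3): making precise that the $\sT$-torsor $\sG/\sU \to \sG/\sB$ admits an equivariant rational section. The subtlety is that a torsor under a split torus over a field $K$ is trivial by Hilbert 90, but we need this \emph{compatibly with the residual $G$-action}, i.e. over the function field $K = k(\sG/\sB)$ regarded as sitting inside the $G$-equivariant picture. The clean way to phrase this is to observe, following the argument for translation actions, that $G$ acts generically freely on the base, so the quotient $(\sG/\sB)/G$ is a variety with function field $K^G$, and over the generic point the torsor class lives in $\rH^1(K, \sT) = 0$; one then needs that a generically chosen section can be averaged or spread out equivariantly, which is exactly the content of the No-Name-type argument already used twice in this section. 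A secondary technical point is confirming generic freeness propagates up the tower in step (1); this is routine since the fibers of $\sG/\sU \to \sG/\sB$ are torsors under the connected group $\sT$, on which the finite group $G$ acts through an action on the total space that is compatible with a generically free action downstairs, so no nontrivial element of $G$ can fix a general fiber pointwise.
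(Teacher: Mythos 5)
Your argument is correct, but it handles the crucial step differently from the paper. Both proofs run along the tower $\sG \to \sG/\sU \to \sG/\sB$, use that $\sG/\sU$ is stably linearizable (via the translation action on the special group $\sG$ and the vector-bundle structure of $\sG \to \sG/\sU$), and then must show that $\sG/\sU$ is $G$-equivariantly birational to $(\sG/\sB)\times \bA^r$ with trivial action on the affine factor. You do this by viewing $\sG/\sU \to \sG/\sB$ as a torsor under the split torus $\sT$, descending it (generically) to the quotient by the generically free $G$-action, and trivializing over the function field of the quotient by Hilbert 90, i.e.\ speciality of split tori --- the same mechanism as in the proposition on translation actions. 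The paper instead writes $\sG/\sU = L_1^* \times_{\sG/\sB} \cdots \times_{\sG/\sB} L_r^*$, where $L_1,\dots,L_r$ are line bundles forming a basis of $\Pic(\sG/\sB)$ to which the $G$-action linearizes, and applies the No-Name Lemma to the corresponding $G$-vector bundle over the generically free base; this sidesteps entirely the equivariant-descent point you identify as your main obstacle, since the No-Name Lemma already packages the equivariant trivialization. Your route buys uniformity with the earlier torsor arguments and does not need the Picard-group description of $\sG/\sB$; the paper's route is more self-contained given the tools already set up in Section~\ref{subsect:katsylo}. One small correction to your discussion of step (3): the needed equivariant trivialization of the $\sT$-torsor is not literally ``the content of the No-Name-type argument'' (that lemma concerns vector bundles); what makes your descent work is that the left $G$-action on $\sG/\sU$ commutes with the right $\sT$-action, so the torsor descends generically to a $\sT$-torsor over $k(\sG/\sB)^G$, where speciality of $\sT$ applies and the pulled-back section is automatically $G$-equivariant --- once said this way, no averaging is required.
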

\begin{proof}
Express $\sB = \sU \rtimes \sT$ where $\sT$ is a maximal torus which gives
$$\varpi:\sG/\sU \rightarrow \sG/\sB=:\Fl$$
with fibers isomorphic to $\sT$ Indeed, if $L_1,\ldots,L_r$ are line bundles
forming a basis for $\Pic(\Fl)$ then we may interpret
$$\sG/\sU = L^*_1 \times_{\Fl} \cdots \times_{\Fl} L^*_r,
\quad L^*_i = L_i \setminus 0;$$
in particular, the action of our finite group linearizes to each of the $L_i$.
Our generic-freeness assumption means that $\sG/\sU$ is equivariantly
birational to $\bA^r \times \Fl$ with trivial action on the first factor.
Thus $\sG/\sB$ is stably linearizable.
\end{proof}

We record an application of the Serre-Grothendieck classification
of special groups \cite[Section~4]{Serre1958}:

\begin{lemm} \label{lemm:serre}
Let $\sG$ denote a special semisimple linear algebraic group 
and $\sP \subset \sG$ a (split) parabolic subgroup. 
Then the Levi factors of $\sP$ are also special.  
\end{lemm}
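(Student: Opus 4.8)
The plan is to use the Serre--Grothendieck classification, which says a special semisimple group is a product of factors of type $\SL_n$ and $\Sp_{2n}$, and then to reduce to understanding the Levi factors of parabolics in these two families explicitly. First I would reduce to the case where $\sG$ is simple by observing that a parabolic in a product $\sG_1 \times \sG_2$ is a product $\sP_1 \times \sP_2$ of parabolics, its Levi factor is the product of the Levi factors, and a product of special groups is special; so it suffices to treat $\sG$ of type $\SL_n$ or $\Sp_{2n}$.

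Next I would analyze the two families separately. For $\sG = \SL_n$, a standard parabolic is a stabilizer of a flag $0 \subset V_1 \subset \cdots \subset V_m \subset k^n$ with $\dim V_i = d_i$, and its Levi factor is $\mathsf{S}(\GL_{d_1} \times \GL_{d_2 - d_1} \times \cdots \times \GL_{n - d_m})$, the subgroup of block-diagonal matrices of determinant one. The key point is that this group, while not literally a product of $\GL$'s, is special: it fits in a short exact sequence
\begin{equation*}
1 \rightarrow \mathsf{S}(\textstyle\prod_i \GL_{e_i}) \rightarrow \textstyle\prod_i \GL_{e_i} \xrightarrow{\prod \det} \bG_m \rightarrow 1,
\end{equation*}
and since $\prod_i \GL_{e_i}$ and $\bG_m$ are special, the long exact cohomology sequence (together with surjectivity of $\rH^1(K,\prod_i\GL_{e_i}) \to \rH^1(K,\bG_m)$, which is trivially $0 \to 0$) forces $\rH^1(K, \mathsf{S}(\prod_i \GL_{e_i})) = 0$ for all $K/k$. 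Actually one must be slightly careful: vanishing of the two outer $\rH^1$'s alone gives $\rH^1$ of the middle term is covered by the image of $\rH^1$ of the kernel, so the argument runs the other direction; the clean statement is that an extension of a special group by a special group is special, which follows because in the exact sequence $\rH^1(K,\sG_1)\to \rH^1(K,\sG_2)\to \rH^1(K,\sG_3)$ the outer terms vanish, hence the middle does. Here $\sG_1 = \mathsf{S}(\prod\GL_{e_i})$ is the kernel and $\sG_3 = \bG_m$, with $\sG_2 = \prod \GL_{e_i}$; I get $\rH^1(K,\sG_1) \to \rH^1(K,\sG_2) = 0$, which shows every $\sG_1$-torsor becomes trivial over $\sG_2$, but to conclude $\rH^1(K,\sG_1) = 0$ I also need the connecting map $\sG_3(K) = \bG_m(K) \to \rH^1(K,\sG_1)$ to be surjective onto the whole group and to be zero — the former holds by exactness, and the latter because $\bG_m(K) = K^\times$ surjects onto $\rH^1(K,\sG_1)$ and factors through the determinant, which is already surjective on $K$-points $\sG_2(K) \to \sG_3(K)$, killing the connecting map. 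So $\rH^1(K,\sG_1) = 0$.

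For $\sG = \Sp_{2n}$, a standard maximal parabolic is the stabilizer of an isotropic subspace $V_1$ of dimension $d \le n$, and its Levi factor is $\GL_d \times \Sp_{2(n-d)}$; more generally for a parabolic stabilizing an isotropic flag the Levi factor is $\GL_{d_1} \times \cdots \times \GL_{d_m} \times \Sp_{2(n - d_1 - \cdots - d_m)}$, where the $\GL$ blocks act on the successive quotients of the isotropic flag and the symplectic block acts on the anisotropic kernel. Each factor $\GL_{d_i}$ is special, and $\Sp_{2(n-\sum d_i)}$ is special, so the product is special (a finite product of special groups is special, by taking cohomology factor by factor). This exhausts both families, so the proof is complete.

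The main obstacle I anticipate is the $\SL_n$ case: the Levi factor is the determinant-one subgroup $\mathsf{S}(\prod_i \GL_{e_i})$, which is \emph{not} a product of $\SL$'s and $\GL$'s, so the Serre--Grothendieck list does not directly apply to it. One must instead invoke the more flexible principle that an extension of a special group by a special group is special, and check that $\mathsf{S}(\prod_i\GL_{e_i})$ is such an extension with $\bG_m$ as quotient — this requires the determinant map to be surjective on $K$-points, which holds since $\bG_m$ is special (so $\rH^1$ vanishes) and $K$ is a field. Once this structural observation is in place, the $\Sp_{2n}$ case is routine since its Levi factors genuinely are products of groups already on the special list.
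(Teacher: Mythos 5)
Your argument is correct and follows the route the paper intends: the paper records the lemma as an immediate application of the Serre--Grothendieck classification without a separate proof, and your reduction to $\SL_n$ and $\Sp_{2n}$ factors, with the exact-sequence argument showing the type-A Levi $\mathsf{S}(\prod_i \GL_{e_i})$ is special, supplies exactly the details left implicit. One small nit: surjectivity of the determinant on $K$-points is elementary (hit $t\in K^\times$ with $\mathrm{diag}(t,1,\dots,1)$) rather than a consequence of $\bG_m$ being special, but this does not affect the proof.
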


\begin{theo}
\label{thm:lin}
Let $\sG$ denote a special semisimple group, 
$\sP \subset \sG$ a split parabolic subgroup, and $G \hookrightarrow \sG$
a finite group. If $G$ acts generically freely on $\sG/\sP$ then
this action is stably linearizable.
\end{theo}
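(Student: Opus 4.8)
The plan is to reduce the parabolic case to the Borel case already handled in Proposition~\ref{prop:flag}, using Lemma~\ref{lemm:serre} to control the Levi factor. First I would write $\sP = \sR_u(\sP) \rtimes \sL$ for a Levi decomposition, where $\sR_u(\sP)$ is the unipotent radical and $\sL$ is the Levi factor. By Lemma~\ref{lemm:serre}, $\sL$ is special; it is also semisimple up to a central torus, and in any case is a special connected linear algebraic group. Choose a Borel subgroup $\sB_{\sL} \subset \sL$, which pulls back to a Borel subgroup $\sB \subset \sG$ with $\sB \subset \sP$. This gives a tower
$$\sG/\sB \stackrel{\psi}{\longrightarrow} \sG/\sP,$$
whose fiber over the identity coset is $\sP/\sB \cong \sL/\sB_{\sL}$, the flag variety of the Levi.

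The second step is to arrange generic freeness upstairs. Since $G$ acts generically freely on $\sG/\sP$, I want to conclude it acts generically freely on $\sG/\sB$ as well; indeed the map $\psi$ is $G$-equivariant and dominant, and a point of $\sG/\sB$ with nontrivial stabilizer would map to a point of $\sG/\sP$ — but a subtlety is that the stabilizer could become trivial only generically, so I would instead argue directly: the stabilizer of a general point of $\sG/\sB$ is contained in the stabilizer of its image in $\sG/\sP$, which is trivial for a general such image, and the fibers of $\psi$ are irreducible, so genericity is preserved. Granting this, Proposition~\ref{prop:flag} applied to $\sG$ and its Borel $\sB$ shows $\sG/\sB$ is stably linearizable.

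The third step is to descend from $\sG/\sB$ to $\sG/\sP$. Here I would use the structure of $\psi$. Writing $\sB = \sU \rtimes \sT$ with $\sU = \sR_u(\sB)$ and $\sT$ a maximal torus, and $\sB_{\sL} = \sU_{\sL} \rtimes \sT$, the intermediate space $\sG/\sU$ fibers over $\sG/\sP$, and the relevant fiber-type geometry is that of $\sL/\sU_{\sL}$ over a point, i.e. a torus bundle whose total space, after suitable birational modification, is a product with affine space on which $G$ acts trivially — exactly as in Proposition~\ref{prop:flag}, where the torus directions came from the line bundles $L_i^*$ spanning $\Pic(\Fl)$. The cleanest route is to observe that $\sG/\sU \to \sG/\sP$ and the No-Name Lemma (using generic freeness of $G$ on $\sG/\sP$, which holds since $\sG/\sP$ is a $G$-equivariant image of the generically free $\sG/\sB$ and $G$ acts generically freely on it by hypothesis) together reduce stable linearizability of $\sG/\sP$ to that of $\sG/\sU$; and $\sG/\sU$ is in turn $G$-birational, via Proposition~\ref{prop:flag}'s argument applied to the full Borel, to $\bA^r \times \sG/\sB$ with trivial action on $\bA^r$, hence stably linearizable once $\sG/\sB$ is.

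The main obstacle I anticipate is the generic-freeness bookkeeping: Proposition~\ref{prop:flag} requires the $G$-action on $\sG/\sB$ to be generically free, and deducing this from generic freeness on $\sG/\sP$ is not entirely formal because $\psi$ has positive-dimensional fibers, so one must rule out a finite subgroup of $G$ acting trivially along a general fiber $\sL/\sB_{\sL}$. I expect this to follow because such a subgroup would lie in every Borel of $\sL$, hence in the center of $\sL$, and a central element of $\sL$ fixing $\sP/\sB$ pointwise together with fixing the image point would have to fix a general point of $\sG/\sP$ — contradicting the hypothesis; but making this rigorous (especially identifying when $G \cap Z(\sL)$ can be nontrivial and still act generically freely on $\sG/\sP$) is the delicate point, and if it genuinely fails one may need, as in the proof of Theorem~\ref{theo:isotropicSL} and the hyperbolic-quadric proposition, to twist by an auxiliary $\bP(U)$ for a cyclic central group before invoking Proposition~\ref{prop:product} and descending via the No-Name Lemma.
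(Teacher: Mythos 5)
Your skeleton is the paper's: pass through the Borel, get $\sG/\sB$ stably linearizable from Proposition~\ref{prop:flag}, and invoke Lemma~\ref{lemm:serre} for the Levi. But note first that the issue you single out as the ``main obstacle'' is not one: you only need generic freeness to pass \emph{up} from the base to the total space, and the stabilizer of any point of $\sG/\sB$ is contained in the stabilizer of its image in $\sG/\sP$, so the preimage of the free locus downstairs is a dense open on which $G$ acts freely. The scenario of a subgroup acting trivially along a general fiber is only relevant for the reverse implication, which you never use; the central-twist contingency via $\bP(U)$ is unnecessary.

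The genuine gap is the descent from $\sG/\sU$ (or $\sG/\sB$) to $\sG/\sP$, which is exactly the content of the theorem beyond Proposition~\ref{prop:flag}. Your stated mechanism --- that ``$\sG/\sU \to \sG/\sP$ and the No-Name Lemma together reduce stable linearizability of $\sG/\sP$ to that of $\sG/\sU$'' --- does not work as written: that map is not a vector bundle, its fiber is $\sP/\sU \simeq \sL/\mathrm{R}_u(\sB_{\sL})$, the base affine space of the Levi, which is not an affine space once $\sL$ is nonabelian, and it is not even a torsor, since $\sU$ is not normal in $\sP$. Tellingly, your argument never actually uses speciality of $\sL$, which is the whole point of Lemma~\ref{lemm:serre}. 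A repair in your spirit: factor through $\sG/\mathrm{R}_u(\sP)$. The map $\sG/\mathrm{R}_u(\sP) \to \sG/\sP$ is a torsor under the special group $\sL$, so, using generic freeness of $G$ on $\sG/\sP$ to descend to the quotient and taking a section over its function field (as in the proof in Section~\ref{sect:flag} that translation actions on special groups are stably linearizable), one gets a $G$-equivariant birational identification of $\sG/\mathrm{R}_u(\sP)$ with $\sG/\sP \times \sL$, trivial action on the rational variety $\sL$; likewise $\sG/\mathrm{R}_u(\sP) \to \sG/\sU$ is a torsor under the unipotent, hence special, group $\sU/\mathrm{R}_u(\sP)$. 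Combining, $\sG/\sU \times \bA^a$ and $\sG/\sP \times \bA^b$ are $G$-birational with trivial actions on the affine factors, so stable linearizability of $\sG/\sU$ does descend to $\sG/\sP$. The paper instead argues by induction on $\dim \sG$ along a maximal chain of parabolics between $\sB$ and $\sP$, each step of the tower being fibered in projective homogeneous spaces for special Levi subgroups, and iterates the No-Name Lemma step by step; in either route, some use of speciality of the Levi factors is indispensable, and that is precisely what your proposal omits.
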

\begin{proof} 
We proceed by induction on the dimension of $\sG$. 

Choose a maximal sequence of parabolic subgroups
$$\sP =: \sP_1 \subsetneq \sP_2 \subsetneq \cdots \sP_{m-1} \subsetneq \sP_m=\sB$$
and consider the tower
$$\sG/\sB \rightarrow \sG/\sP_{n-1} \rightarrow \cdots \rightarrow \sG/\sP.$$
Each step of this tower is fibered in 
projective homogeneous spaces for $\sL$, a Levi subgroup of some parabolic 
subgroup of $\sG$. These groups are special by Lemma~\ref{lemm:serre}, 
and their homogeneous spaces are stably linearizable  by induction. 
As before, iterating the No-Name Lemma yields
$$\sG/\sB \,\, \text{stab.~lin.} \Rightarrow \sG/\sP_{n-1}\,\,  \text{stab.~lin.}
\Rightarrow \cdots \Rightarrow \sG/\sP \,\, \text{stab.~lin.}$$
\end{proof}

\section{Quadric hypersurfaces}
\label{sect:quadric}

\subsection{Arbitrary dimensions}
Before considering specific actions of finite groups on quadrics, we record general considerations regarding the presentation of such actions. Let 
$$
X\subset \bP^n
$$
be a smooth quadric hypersurface. Let $\mathsf q$ be the associated quadratic form, which is unique, up to scalars. We consider finite subgroups $G\subset \PGL_{n+1}$
preserving $X$. 

The Amitsur invariant \cite[Section 3.5]{HTNagoya} yields:
\begin{prop}
If the action of $G$ on $X$ is stably linearizable then we can lift
$G \subset \GL_{n+1}$.
\end{prop}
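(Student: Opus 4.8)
The plan is to unwind what stable linearizability gives us via the Amitsur invariant and show it forces the central extension encoding the $\PGL_{n+1}$-action to split. Recall that $X$ determines the quadratic form $\mathsf q$ up to scalar, and a subgroup $G\subset\PGL_{n+1}$ preserving $X$ is the same as a subgroup preserving the conic bundle / the projective conic class; concretely $G$ acts on $\bP^n=\bP(V)$ and preserves the line $k\mathsf q\subset\Sym^2 V^\vee$. First I would set up the obstruction: pulling back the standard central extension $1\to\mathbb{G}_m\to\GL_{n+1}\to\PGL_{n+1}\to 1$ along $G\hookrightarrow\PGL_{n+1}$ gives a central extension $1\to\mathbb{G}_m\to\widetilde G\to G\to 1$, and lifting $G$ to $\GL_{n+1}$ is exactly the question of splitting this extension (equivalently, the vanishing of its class in $\rH^2(G,\mathbb{G}_m)$, i.e. in $\Hom(\rH_2(G,\bZ),k^\times)$ after using that $k$ is algebraically closed). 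The Amitsur invariant of the $G$-action on $\bP(V)$ is precisely (the image of) this class; this is the content of \cite[Section 3.5]{HTNagoya}, which I would cite.

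Next I would observe that the $G$-action on $X$ (as opposed to on $\bP(V)$) has its \emph{own} natural lift: since $G$ preserves the line $k\mathsf q$, and $k$ contains all roots of unity, the character by which $G$ scales $\mathsf q$ has a square root as a character of $G$ — or more directly, $G$ acts on the affine quadric cone $\widehat X\subset V$, and this linear action of $G$ on $V$ (possibly after the usual twist by a character to make it preserve $\mathsf q$ on the nose, using that $k=\bar k$) is a genuine lift $G\hookrightarrow\GL_{n+1}$. Wait — that would prove the statement unconditionally, which is false in general (the hypothesis "stably linearizable" must be doing real work). The correct picture is: $G\subset\PGL_{n+1}$ preserving $X$ does \emph{not} automatically lift, because preserving the \emph{projective} quadric only means $G$ scales $\mathsf q$ by a character $\chi:G\to k^\times$, and to absorb $\chi$ into a lift one needs a square root $\chi^{1/2}$, which exists as a character of $G$ iff $\chi$ is trivial on the $2$-torsion of $G^{\mathrm{ab}}$ — and even when such a lift of the form "act on $V$" is obstructed, the obstruction is a very specific $2$-torsion class, which is exactly the Amitsur class of the $\PGL$-action on $\bP(V)$.

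So the argument proper runs: the Amitsur invariant $\mathrm{Am}(X)\in\Br(k(X)^G/\ldots)$ — or rather its group-cohomological incarnation $\alpha\in\rH^2(G,k^\times)$ — is a stable birational invariant of the $G$-variety $X$ (this is recorded in \cite[Section 3.5]{HTNagoya} and used already in the Example after Remark~\ref{rema:NoName}); hence if $X$ is stably linearizable, i.e. $X\times\bP^m$ is $G$-birational to $\bP(V')$ for a linear representation $V'$, then $\alpha$ equals the Amitsur class of $\bP(V')$, which vanishes because the $G$-action on $\bP(V')$ lifts to $\GL(V')$ by construction. Therefore $\alpha=0$, the central extension $\widetilde G\to G$ splits, and a splitting is exactly a lift $G\hookrightarrow\GL_{n+1}$ of the inclusion $G\subset\PGL_{n+1}$. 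I would state this lift lands in the subgroup stabilizing $\widehat X\subset V$ after the standard normalization by a character (harmless since $k=\bar k$), so it is genuinely a lift of the $G$-action on $X$.

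The main obstacle is making precise the identification "Amitsur invariant of the $\PGL_{n+1}$-action on $\bP(V)$ $=$ class of the central extension $\widetilde G$", and checking its stable birational invariance in the equivariant setting; but this is exactly the formalism developed in \cite[Section 3.5]{HTNagoya}, so in this note it can be invoked rather than reproved. A minor point to handle carefully is the distinction between "$G$ preserves $\mathsf q$" and "$G$ preserves $X$" (the character $\chi$ above) and why, once $\alpha=0$, one can choose the lift to actually fix $\mathsf q$ up to a constant and hence act on $X$ — this is the usual argument that a character of a finite group into $k^\times$ admits a square root when $k$ is algebraically closed, applied after trivializing the central extension.
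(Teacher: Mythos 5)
Your core argument --- identify the obstruction to lifting $G\subset\PGL_{n+1}$ to $\GL_{n+1}$ with the class of the pulled-back central extension in $\rH^2(G,k^\times)$, recognize this class inside the Amitsur invariant of the $G$-variety $X$, and kill it using that the Amitsur invariant is a stable birational invariant vanishing for projectivized linear representations --- is exactly the argument the paper intends, which it compresses into the single phrase ``The Amitsur invariant \cite[Section 3.5]{HTNagoya} yields'' with no further proof. The two detours in your write-up should simply be deleted rather than ``handled carefully'': $G$ itself never acts on $V$ or on the affine cone (only the extension $\widetilde G$ does), and your closing claim that a character of a finite group into $k^\times$ always admits a square root over an algebraically closed field is false (the sign character of $C_2$ has none); but neither point is needed, since the proposition only asserts a lift to $\GL_{n+1}$ --- any splitting of the extension automatically induces the given action on $X$, and normalization with respect to $\mathsf q$ is treated separately (cf.\ Proposition~\ref{prop:norm}).
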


When the number of variables is odd, we can always work with 
the special orthogonal group:
\begin{prop} \label{prop:norm}
If $n=2m$ is even then we may assume $G \subset \SL_{2m+1}$.
\end{prop}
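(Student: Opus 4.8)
The plan is to realize the subgroup of $\PGL_{2m+1}$ preserving $X$ concretely as $\SO_{2m+1}$, exploiting that the number of homogeneous coordinates, $2m+1$, is odd.

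First, fix a quadratic form $\mathsf q$ on $k^{2m+1}$ with $X=\{\mathsf q=0\}$; it is non-degenerate because $X$ is smooth, and it is unique up to a nonzero scalar. A class in $\PGL_{2m+1}$ preserves $X$ precisely when some (equivalently, every) linear lift $\tilde g$ satisfies $\mathsf q\circ\tilde g=\lambda\,\mathsf q$ for a scalar $\lambda\in k^*$, i.e.\ lies in the group $\mathrm{GO}(\mathsf q)\subset \GL_{2m+1}$ of orthogonal similitudes. Since $k$ is algebraically closed and not of characteristic two, the multiplier homomorphism $\mathrm{GO}(\mathsf q)\to k^*$, $\tilde g\mapsto\lambda$, is surjective and split by $\lambda\mapsto\lambda^{1/2}I$; hence $\mathrm{GO}(\mathsf q)=k^*I\cdot O(\mathsf q)$, and the image of $\mathrm{GO}(\mathsf q)$ in $\PGL_{2m+1}$ equals
$$ O(\mathsf q)\big/\big(O(\mathsf q)\cap k^*I\big)=O(\mathsf q)/\{\pm I\}, $$
using that the only orthogonal scalars are $\pm I$.

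Now invoke the hypothesis that $2m+1$ is odd: $\det(-I)=(-1)^{2m+1}=-1$, so $-I\notin\SO(\mathsf q)$, while $\{\pm I\}$ is central in $O(\mathsf q)$ and $O(\mathsf q)=\SO(\mathsf q)\cup(-I)\SO(\mathsf q)$ since $\SO(\mathsf q)$ has index two. Thus $O(\mathsf q)=\SO(\mathsf q)\times\{\pm I\}$ as an internal direct product, and the composite
$$ \SO(\mathsf q)\hookrightarrow O(\mathsf q)\twoheadrightarrow O(\mathsf q)/\{\pm I\}\hookrightarrow\PGL_{2m+1} $$
is an injective homomorphism whose image is the full stabilizer of $X$; moreover it is a section of the projection $\mathrm{GO}(\mathsf q)\to\PGL_{2m+1}$, so it sends each $g$ to a genuine linear lift inducing the original action on $X\subset\bP^{2m}$. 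Applying this to $G$ gives $G\hookrightarrow\SO(\mathsf q)=\SO_{2m+1}\subset\SL_{2m+1}$ compatibly with the action on $X$, and after this replacement we may assume $G\subset\SL_{2m+1}$.

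The argument is essentially immediate; the only point requiring a little care is distinguishing ``preserving the quadric $X$'' (the similitude group $\mathrm{GO}$, with quotient $\mathrm{PGO}$) from ``preserving the form $\mathsf q$'' (the isometry group $O$) --- surjectivity of the multiplier over the algebraically closed field $k$ reconciles the two --- after which the splitting $O(\mathsf q)=\SO(\mathsf q)\times\{\pm I\}$, valid only in odd rank, does all the work. (In even rank one has $-I\in\SO$, and $\mathrm{PGO}$ becomes $\SO/\{\pm I\}$, which is why the statement is restricted to odd $n+1$.)
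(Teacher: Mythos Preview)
Your proof is correct and rests on the same mechanism as the paper's: the odd-rank splitting $O_{2m+1}=\SO_{2m+1}\times\{\pm I\}$, which the paper encodes as the twist $\rho=\det(\varrho)\cdot\varrho$ (this map is exactly the projection onto the $\SO$-factor). You are in fact a bit more careful than the paper, which begins from a lift $\varrho:G\hookrightarrow O_{2m+1}$ without comment, whereas you pass through the similitude group $\mathrm{GO}(\mathsf q)$ to explain why the lift from $\PGL_{2m+1}$ exists in the first place.
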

\begin{proof}
Given 
$$
\varrho: G \hookrightarrow \mathsf{O}_{2m+1}
$$
there is a modified representation
$$\rho = \det(\varrho) \cdot \varrho: G \rightarrow \SO_{2m+1}
$$
that is projectively equivalent to $\varrho$. 
Note that $\rho$ is injective if and only if 
the image of $\varrho$ does not contain $-I$, 
i.e. $\varrho$ acts generically freely on $\bP^{2m}$.  
\end{proof}

\subsection{Pfaffian constructions}
\label{subsect:Pfaff}
We recall the Pfaffian construction,
for quadric hypersurfaces; see 
\cite[Section 7]{BBT} for further details.

Let $M$ denote an antisymmetric 
$2r\times 2r$ matrix. The 
{\em Pfaffian form} $\Pf(M)$ 
is a homogeneous form of degree $r$
such that
$$\Pf(M)^2 = \det(M).$$
When $r=2$
$$M=\left( \begin{matrix}
0 & m_{12} & m_{13} & m_{14} \\
-m_{12} & 0 & m_{23} & m_{24} \\
-m_{13} & -m_{23} & 0 & m_{34} \\
-m_{14} & -m_{24} & -m_{34} & 0
\end{matrix}
\right)$$
we have
$$\Pf(M) = m_{12}m_{34} - m_{13}m_{23} + m_{14}m_{23}$$
the Pl\"ucker relation for the Grassmannian 
$$\Gr(2,W) \subset \bP(\wedge^2 W),
\quad \dim(W)=4.$$
We recall a few properties:
\begin{itemize}
\item{Let $(V,\mathsf{q})$ be a non-degenerate
quadratic form where $V$ is a 
six-dimensional representation of $G$.
This is Pfaffian if and only if
$$V \simeq \wedge^2 W, \quad \dim(W)=4$$
and $\mathsf{q}$ coincides with the (symmetric!) wedge pairing
$$\wedge^2 W \times \wedge^2 W 
\ra \wedge^4 W.$$}
\item{Let $V$ be a non-degenerate
quadratic form with $\dim(V)=5$. 
Then $V$ is Pfaffian if and only if
there exists a symplectic representation
$$(W,\omega), \quad \dim(W)=4$$ 
of $G$, with 
$$\wedge^2 W  = \left<\omega\right>
\oplus V.$$}
\item{In either case, assuming
$G$ acts generically freely on
$$X=\{\mathsf{q}=0\} \subset \bP(V),$$ 
then $X$ is stably birational to $W$.}
\end{itemize}
The last statement arises from the 
inclusion and projection morphisms
$$\cS|X \hookrightarrow W\times X
\rightarrow W,$$
where $\cS\ra \Gr(2,W)$ is the universal
subbundle.


\subsection{Springer-type results}
The following is a corollary of \cite[Theorem~10.2]{DR}:
\begin{prop} \label{prop:springer}
Let $X \subset \bP^n,n\ge 2,$ denote a smooth quadric hypersurface,
with a generically-free action by a finite group $G$. 
Let $G_2 \subseteq G$ denote a $2$-Sylow subgroup. 
Then the $G$-action on $X$ is stably linearizable if and only
if the $G_2$-action on $X$ is stably linearizable.
\end{prop}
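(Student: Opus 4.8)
The plan is to deduce Proposition~\ref{prop:springer} from the characterization of stable linearizability in terms of stable rationality of twists, namely the equivalence
$$\mathbf{(SL)} \Leftrightarrow {}^TX \text{ is stably rational over } K \text{ for all twisting pairs } (T,K)$$
recorded in \cite[Theorem~1.1]{DR}, combined with a Springer-type statement for stable rationality of quadrics over nonclosed fields. The forward implication $\mathbf{(SL)}$ for $G$ $\Rightarrow$ $\mathbf{(SL)}$ for $G_2$ is formal: restricting a stably linearizing birational map to the subgroup $G_2 \subseteq G$ shows that if $X\times W$ is $G$-equivariantly birational to a linear representation of $G$, then it is also $G_2$-equivariantly birational to (the restriction of) that representation, which is still linear for $G_2$. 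So the content is in the converse.

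For the converse, suppose the $G_2$-action on $X$ is stably linearizable; I want to show every twist ${}^TX$ over every field extension $K/k$ is stably rational over $K$. Fix such a twisting pair $(T,K)$. The torsor $T \to \Spec(K)$ corresponds to a class in $\rH^1(K,G)$, and by a standard transfer/corestriction argument in Galois cohomology, after passing to a finite extension $K'/K$ of degree prime to $2$ the class of $T$ reduces to the image of a class in $\rH^1(K',G_2)$ — this is exactly the input that makes Springer's theorem work. Concretely, over such a $K'$ the twist ${}^TX \times_K K'$ is isomorphic to a twist ${}^{T'}X$ by a $G_2$-torsor $T'$; since the $G_2$-action on $X$ is stably linearizable, $(T',K')$ being a twisting pair for $G_2$ gives that ${}^{T'}X = {}^TX\times_K K'$ is stably rational over $K'$. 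Thus ${}^TX$ acquires a point — even stable rationality — after an odd-degree extension.

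The final step is to descend stable rationality of a quadric along an odd-degree field extension. Here I would invoke Springer's theorem and its refinements for quadrics: a quadric hypersurface over $K$ that has a rational point after an odd-degree extension already has a $K$-rational point (classical Springer), and more is true — by the work on stable birational equivalence of quadrics (e.g. in the circle of ideas of \cite{Totaro}, and the results underlying \cite{DR}), stable rationality of a quadric over a field is detected by behavior over odd-degree extensions. So ${}^TX$ stably rational over $K'$ with $[K':K]$ odd forces ${}^TX$ stably rational over $K$. Since $(T,K)$ was arbitrary, $X$ satisfies $\mathbf{(SL)}$ for $G$.

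The main obstacle I expect is making the middle step fully rigorous: the reduction of an arbitrary $G$-torsor over $K$ to a $G_2$-torsor after an odd-degree extension, and the compatibility of this reduction with the twisting operation on $X$, so that ${}^TX\times_K K'$ is genuinely a $G_2$-twist rather than merely a $G$-twist that happens to split over a larger field. This is precisely the group-cohomological heart of Springer's argument and is where \cite[Theorem~10.2]{DR} does the real work; in a self-contained write-up one would have to quote that theorem rather than reprove it. A secondary subtlety is that ``stably rational'' must be descended, not just ``rational'' or ``has a point'', so one needs the stable-birational version of Springer for quadrics, which is again supplied by \cite{DR}. Given those inputs, the proof is a two-line citation; without them, it is essentially a reproof of the Springer machinery.
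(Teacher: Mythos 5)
Your argument is correct and follows essentially the same route as the paper: translate stable linearizability into stable rationality of all twists via \cite[Theorem 1.1]{DR}, reduce a $G$-torsor to a $G_2$-torsor over an odd-degree extension (the content of \cite[Theorem 10.2]{DR}, which the paper likewise quotes rather than reproves), and then use Springer's theorem together with stereographic projection to identify ``stably rational,'' ``rational,'' ``has a point,'' and ``has a point over an odd-degree extension'' for positive-dimensional smooth quadrics. The only cosmetic quibble is that the reduction step is not really a transfer/corestriction argument but simply restriction to the preimage of $G_2$ under the classifying homomorphism, which has odd index; the mechanism you then describe is the correct one.
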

Recall the equivalences stated in Section~\ref{subsect:NoV}:
In particular, a group action on $X$ is weakly versal if each 
twist by the group admits rational points and stably linearizable if
each twist is stably rational.  However, Springer's Theorem
and stereographic projection
guarantee that, for a smooth positive-dimensional quadric hypersurface $Y$ over a field $L$,
the following are equivalent:
\begin{itemize}
\item{$Y$ is rational over $L$;}
\item{$Y$ is stably rational over $L$;}
\item{$Y$ has a rational point over $L$;}
\item{$Y$ has a rational point over an odd-degree extension
of $L$.}
\end{itemize}
Now the last condition corresponds to passing from $G$
to a $2$-Sylow subgroup $G_2$, thus Proposition~\ref{prop:springer}
follows.

\begin{rema}
This argument is a bit vexing, as we are {\em not} showing that
linearizability can be checked on passage to a $2$-Sylow subgroup!
The dictionary of Section~\ref{subsect:NoV} leaves out
birational equivalence to a $G$-representation or its 
projectivization.  
\end{rema}
\begin{exam}
Consider the action of $G=\fS_3\times C_2$ on 
$$X=\{x_1^2 + x_2^2 + x_3^2+x_4^2 =0 \} \subset \bP^3$$
where $C_2$ acts via $x_4 \mapsto -x_4$ and $\fS_3$ 
permutes $x_1,x_2,$ and $x_3$.  By \cite{Isk-s3} this action
is not linearizable; a proof using Burnside invariants
may be found in \cite[Section~7.6]{HKTsmall}.  
On the other hand, the $2$-Sylow subgroup 
$G_2$ fixing $x_3$ has fixed points $\{x_4=x_1-x_2=0\} \cap X$.
The stable linearizability of this action has been shown, using different techniques, in
\cite{lemire} and \cite[Section~6]{HTNagoya}.
\end{exam}

\subsection{Quadric threefolds} 
\label{subsect:3-pfaff}
Theorem~\ref{thm:lin} -- combined with the symplectic
interpretation of $\mathsf{Spin}_5$ and the fact
that symplectic groups are special -- yields the following:

\begin{theo} \label{theo:threefold}
Let $X \subset \bP^4$ be a smooth three-dimensional quadric.
Suppose that $G \subset \SO_5$ acts on $X$ generically freely.
The action of $G$ is stably linearizable if the induced extension
$$
\centerline{
\xymatrix{
 1  \ar[r] &  \mu_2 \ar[r] \ar@{=}[d] &  \widetilde{G} \ar@{^{(}->}[d]\ar[r]  & G \ar[r] \ar@{^{(}->}[d]& 1 \\
  1 \ar[r] &  \mu_2 \ar[r] & \mathsf{Sp}_4 \ar[r]  & \mathsf{SO}_5 \ar[r]& 1 
}
}
$$
is trivial, i.e., the restriction homomorphism
$$\rH^2(\SO_5,\mu_2) \rightarrow \rH^2(G,\mu_2)$$
vanishes on the distinguished extension.  
\end{theo}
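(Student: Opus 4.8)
The plan is to reduce the statement to an application of Theorem~\ref{thm:lin} for the special group $\sG = \mathsf{Sp}_4$ and an appropriate parabolic subgroup. First I would recall the exceptional isogeny $\mathsf{Sp}_4 \to \SO_5$ with kernel $\mu_2$, coming from the action of $\mathsf{Sp}_4$ on the five-dimensional space $V := \wedge^2 W / \langle \omega \rangle$, where $(W,\omega)$ is the standard four-dimensional symplectic representation: the wedge pairing on $\wedge^2 W$ descends to a non-degenerate quadratic form $\mathsf q$ on $V$, and this realizes the three-dimensional quadric $X = \{\mathsf q = 0\} \subset \bP(V)$ as the space of isotropic lines in $W$, i.e., the Lagrangian Grassmannian $\mathrm{LGr}(2,W) = \mathsf{Sp}_4 / \sP$ for the Siegel parabolic $\sP$ stabilizing a Lagrangian plane. (Equivalently, $X \cong \mathsf{Sp}_4/\sP$ is the smooth quadric threefold, with $\sP$ a maximal parabolic.) Crucially, $\mathsf{Sp}_4$ is special — symplectic groups are on the list in Section~\ref{sect:flag}, per \cite[Section 4.2]{CT-inv} — so Theorem~\ref{thm:lin} applies to any finite subgroup of $\mathsf{Sp}_4$ acting generically freely on $\mathsf{Sp}_4/\sP$.

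Next I would perform the actual reduction. Given $G \subset \SO_5$ acting generically freely on $X$, the hypothesis is that the pullback of the distinguished class in $\rH^2(\SO_5,\mu_2)$ to $\rH^2(G,\mu_2)$ vanishes; equivalently, the central extension $\widetilde G$ in the displayed diagram splits, so there is a lift $\widetilde G \cong G \subset \mathsf{Sp}_4$ (a set-theoretic splitting of a central extension by a group is a homomorphic splitting). I would then need to check that this finite subgroup $G \subset \mathsf{Sp}_4$ acts generically freely on $X = \mathsf{Sp}_4/\sP$. This is immediate: the $\mathsf{Sp}_4$-action on $X$ factors through $\SO_5$ (the kernel $\mu_2$ of $\mathsf{Sp}_4 \to \SO_5$ acts trivially on $X$, as it acts trivially on $V = \wedge^2 W/\langle\omega\rangle$), and under the isomorphism $\widetilde G \xrightarrow{\sim} G$ the action on $X$ is precisely the original generically free $G$-action. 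Now Theorem~\ref{thm:lin}, applied with $\sG = \mathsf{Sp}_4$ and parabolic $\sP$, gives that the $G$-action on $X \cong \mathsf{Sp}_4/\sP$ is stably linearizable.

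The main obstacle — really the only nontrivial point — is pinning down the identification $X \cong \mathsf{Sp}_4/\sP$ with the correct $G$-equivariant structure, i.e., making sure that the $\SO_5$-action one starts with genuinely matches the action of $\mathsf{Sp}_4$ (through its quotient) on the Lagrangian Grassmannian. This is where the Pfaffian dictionary of Section~\ref{subsect:Pfaff} does the work: the classification recorded there says that a non-degenerate five-dimensional $(V,\mathsf q)$ is the quotient $\wedge^2 W/\langle\omega\rangle$ for a symplectic $(W,\omega)$ exactly when $V$ is "Pfaffian," and here $W$ is simply the defining representation of $\mathsf{Sp}_4$, with $V$ its image in the $\SO_5$-picture; so once $G$ lifts to $\mathsf{Sp}_4$ the representation $V$ automatically acquires this form and $X$ is identified with the Lagrangian Grassmannian $\mathsf{Sp}_4/\sP$ equivariantly. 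I would spell out this identification carefully and note that generic freeness on $X$ is preserved because the isogeny kernel $\mu_2$ acts trivially on $X$; the remaining implication is then a direct citation of Theorem~\ref{thm:lin}. One should also remark, for context, that the $\rH^2$ condition is exactly the very-versality criterion of \cite[Proposition 9.1]{DR} recalled in the earlier remark, so the theorem can be read as: very versality of the $\SO_5$-action, detected cohomologically, forces stable linearizability for quadric threefolds.
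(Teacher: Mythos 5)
Your proposal is correct and follows essentially the same route as the paper, whose (terse) proof is exactly the combination of Theorem~\ref{thm:lin} with the symplectic interpretation $\mathsf{Spin}_5\cong\mathsf{Sp}_4$ — so that the lifted $G$ acts on $X$ realized as the Lagrangian Grassmannian $\mathsf{Sp}_4/\sP$, with the kernel $\mu_2$ acting trivially — and the fact that symplectic groups are special. Two harmless slips of wording: $X$ parametrizes Lagrangian planes in $W$ (equivalently, isotropic lines in $V$), not isotropic lines in $W$; and the closing aside equating the $\rH^2$ condition with the very-versality criterion of \cite[Proposition 9.1]{DR} conflates the spin extension with the $\GL$--$\PGL$ extension treated there, but neither point affects the argument.
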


Here is a geometric interpretation of this theorem: 
An action of $G \subset \SO_5$ on $X\subset \bP^4$ lifts to the
spin group if and only if there is a $G$-equivariant imbedding
$$X \hookrightarrow \Gr(2,4)$$
arising from a representation $G \rightarrow \SL_4$ leaving 
a non-degenerate $2$-form invariant. 
This is the Pfaffian construction
from Section~\ref{subsect:Pfaff}.

\begin{exam}
The converse of Theorem~\ref{theo:threefold} is not true: 
There are linearizable quadric threefolds $X\subset \bP^4$
such that $G \subset \SO_5$ does not lift to $\mathsf{Sp}_4$.
In geometric terms, the variety of lines $F_1(X)$ -- a four-dimensional
projective representation of $G$ -- may have non-vanishing 
invariant.  

Here is a construction: Let $C$ denote a conic with non-trivial Amitsur
invariant, corresponding to a projective representation
$$\phi: G \rightarrow \PGL_2$$
not lifting to a linear representation.  Let $V$ denote the linear
representation associated with the symmetric square of $\phi$; there is
an embedding 
$$C \hookrightarrow \bP(V) \subset \bP( \mathbf{1} \oplus V).$$
The blowup of this projective space along $C$ admits a morphism
$$\varpi: \mathrm{Bl}_C(\bP(\mathbf{1} \oplus V)) \stackrel{\sim}{\longrightarrow} X 
\subset \bP^4$$
given by the linear system of quadrics vanishing along $C$.
Write 
$$x = \varpi(\bP(V)) \in X$$
for the image of the proper transform of the plane spanned by $C$.  
Consider the lines in $\bP(\mathbf{1} \oplus V)$ meeting $C$ at a point,
a projective plane bundle $W\rightarrow C$. (Secants to $C$ are counted twice!)
The No-Name Lemma implies that 
$$W \stackrel{\sim}{\dashrightarrow} \bP^2 \times C,
$$
where the first factor has trivial $G$-action.
The morphism $\varpi$ induces
$$\pi:W \stackrel{\sim}{\longrightarrow} F_1(X)$$
that blows up the lines 
$$\{ \ell : x \in \ell \subset X\} \simeq C.$$
We conclude that $F_1(X)$ -- birational to $C \times \bP^2$ -- also
has non-trivial Amitsur invariant.  
\end{exam}

\subsection{Quadric fourfolds}

Similarly, we observe:
\begin{theo}
\label{thm:4}
Let $X \subset \bP^5$ be a smooth four-dimensional quadric.
Suppose that $G \subset \SO_6$ acts on $X$ generically freely.
The action of $G$ is stably linearizable if the induced extension
$$
\centerline{
\xymatrix{
 1  \ar[r] &  \mu_2 \ar[r] \ar@{=}[d] &  \widetilde{G} \ar@{^{(}->}[d]\ar[r]  & G \ar[r] \ar@{^{(}->}[d]& 1 \\
  1 \ar[r] &  \mu_2 \ar[r] & \mathsf{SL}_4 \ar[r]  & \mathsf{SO}_6 \ar[r]& 1 
}
}
$$
is trivial, i.e., the restriction homomorphism
$$\rH^2(\SO_6,\mu_2) \rightarrow \rH^2(G,\mu_2)$$
vanishes on the distinguished extension.  
\end{theo}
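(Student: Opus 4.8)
The plan is to run the argument of Theorem~\ref{theo:threefold} almost verbatim, with the exceptional isogeny $\Sp_4 \to \SO_5$ replaced by $\SL_4 \to \SO_6$ (equivalently, $\mathsf{Spin}_6 \simeq \SL_4$) and the Lagrangian Grassmannian replaced by the full Grassmannian $\Gr(2,4)$. This Grassmannian is a partial flag variety for the special semisimple group $\SL_4$, so once $X$ is identified with it, the conclusion follows from Theorem~\ref{thm:lin}.

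In detail, I would first assume the distinguished extension is trivial and fix a splitting, that is, a lift $G \hookrightarrow \SL_4$ of the inclusion $G \subset \SO_6$; call $W$ the resulting four-dimensional representation of $G$. Because the splitting lands in $\SL_4$, the line $\wedge^4 W$ is a trivial $G$-representation, so the (symmetric) wedge pairing
$$
\wedge^2 W \times \wedge^2 W \lra \wedge^4 W \simeq k
$$
is a $G$-invariant non-degenerate quadratic form on the six-dimensional space $\wedge^2 W$. Since the isogeny $\SL_4 \to \SO_6$ is by construction this representation on $\wedge^2 W$, I would identify $(V,\mathsf{q})$ $G$-equivariantly with $\wedge^2 W$ equipped with the wedge pairing --- this is precisely the first bullet of Section~\ref{subsect:Pfaff} --- and therefore identify
$$
X = \{\mathsf{q} = 0\} \subset \bP(V)
$$
$G$-equivariantly with the Pfaffian (Pl\"ucker) quadric $\Gr(2,W) \subset \bP(\wedge^2 W) = \bP^5$.

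I would then observe that $\Gr(2,W) = \SL_4/\sP$, where $\sP$ is the (split) maximal parabolic stabilizing a two-dimensional subspace of $W$, and that the central $\mu_2 \subset \SL_4$ acts trivially on $\bP(\wedge^2 W)$, so that the $G$-action on $\Gr(2,W)$ coming from the lift agrees with the given action on $X$ and is in particular generically free. Theorem~\ref{thm:lin}, applied to the special semisimple group $\SL_4$ and the parabolic $\sP$, then gives that the action is stably linearizable.

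I do not expect a genuine obstacle, as this is a direct transcription of the threefold case. The one point demanding care is that the splitting must be taken into $\SL_4$, not merely $\GL_4$: this is what guarantees that the wedge pairing itself, and not only its projective class, is $G$-invariant, so that the projective identification upgrades to an honest equivariant isomorphism $(V,\mathsf{q}) \simeq (\wedge^2 W, \text{wedge pairing})$; it is exactly what the hypothesis $\widetilde G \subset \SL_4$ provides. As in the threefold case, one should not expect a converse: a non-trivial extension is compatible with linearizability of $X$, even though the variety of planes in one of the two rulings of $X$ --- a copy of $\bP^3$ on which $G$ acts projectively --- may then carry a non-trivial Amitsur invariant.
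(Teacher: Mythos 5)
Your proposal is correct and follows essentially the same route as the paper: the paper's (terse) proof is likewise Theorem~\ref{thm:lin} applied to the special semisimple group $\SL_4\simeq\mathsf{Spin}_6$, after using the splitting to lift $G$ and identify $X$ equivariantly with the Pl\"ucker quadric $\Gr(2,W)=\SL_4/\sP$, which is exactly the Pfaffian interpretation of Section~\ref{subsect:Pfaff}. Your care about landing in $\SL_4$ (so the wedge pairing itself is invariant) and about generic freeness descending through the central $\mu_2$ are the right points to check, and they are handled correctly.
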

Compare this with the Pfaffian interpretation in  Section~\ref{subsect:Pfaff}.

\begin{exam}
Let $V$ be a 4-dimensional representation of $\fS_5$. Its exterior square is the 6-dimensional representation; and there is a unique invariant quadric $X\subset \bP^5$.
The $\fS_5$-action on $X$ is not known to be linearizable. Theorem~\ref{thm:4},  combined with the Pfaffian construction, yields stable linearizability for the action on $X$. 
\end{exam}

\section{Applications to threefolds}
\label{sect:quad3}

In this section, we present examples of stable linearizability
constructions, focusing on cases where linearizability is not known.

We let $X$ be a smooth quadric threefold with a generically free regular action of a finite group $G$. We recall a ``nonstandard" linearizability construction, see, e.g., \cite[Sect.~5.8]{CalabiFano}: 
The infinite dihedral group $\bG_m \rtimes \mu_2$ 
acts on the quintic del Pezzo threefold $V_5 \subset \bP^6$
-- which has automorphism group $\PGL_2$. The action lifts
to a linear representation in $\GL_7$; and it stabilizes
\begin{itemize}
\item{a twisted cubic curve $R_3 \subset V_5$;}
\item{a conic $R_2 \subset V_5$;}
\item{a line $R_1 \subset V_5$,}
\end{itemize}
see \cite[Cor.~5.39]{CalabiFano}:
Projection from $R_2$ gives an equivariant birational map
$$
V_5 \stackrel{\sim}{\dashrightarrow} \bP^3.
$$
Projection from $R_1$ gives an equivariant birational map
$$\pi_{C_1}:V_5 \stackrel{\sim}{\dashrightarrow} X$$
onto a smooth quadric threefold. 

The action on 
$$X=\{x_1x_2=x_3x_4+x_5^2\}\subset \bP^4_{x_1,\ldots,x_5}$$ 
is given by 
$$
\begin{array}{rcl}
\tau:   (x_1,x_2,x_3,x_4,x_5)& \mapsto & (x_2,x_1,x_4,x_3,x_5) \\
\sigma: (x_1,x_2,x_3,x_4,x_5) & \mapsto & (\lambda^{-3}x_1, \lambda^3x_2, 
\lambda^{-1}x_3,\lambda x_4,x_5),  
\end{array}
$$
where $\lambda$ is a character of $\bG_m$.

For example, setting $\lambda=e^{2\pi i/3}$ gives a linearization of the $\fS_3$-action 
$$
X\subset \bP(V), \quad V=V_{\pm}\oplus V_2\oplus \mathbf 1, 
$$
where $V_{\pm}$ is the permutation representation on $x_1,x_2$,  
and $V_2$ is the unique faithful 2-dimensional representation of $\fS_3$. To our knowledge, linearizability of other actions of $\fS_3$ is unknown. For $\fD_8$ ($\lambda = e^{\pi i/4}$) we obtain linearizability when 
$$
X\subset \bP(V)\quad V=V_2\oplus V_2'\oplus \mathbf 1, 
$$
where $V_2$ and $V_2'$ are {\em different} faithful 2-dimensional representations of $\fD_8$.

\begin{prop}
\label{prop:spr}
Let
$$
X:=\{ x_1x_2=x_3x_4 +x_5^2\}
$$
and $G=\fD_{4n}$ with $n$ odd, acting via faithful 2-dimensional representations on $\{x_1,x_2\}$ and $\{x_3,x_4\}$, and trivially on $x_5$. Then the $G$-action on $X$ is stably linearizable. 
\end{prop}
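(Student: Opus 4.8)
The plan is to realize the $G=\fD_{4n}$ action on $X=\{x_1x_2=x_3x_4+x_5^2\}$ as an action on a quadric threefold carrying a nonzero isotropic subspace, so that Theorem~\ref{theo:isotropicSL} applies. Writing the quadratic form as $\mathsf{q}=x_1x_2-x_3x_4-x_5^2$ on $V=\bC^5$, the coordinate splitting $\bC^2_{x_1,x_2}\oplus\bC^2_{x_3,x_4}\oplus\bC_{x_5}$ is an orthogonal decomposition into two hyperbolic planes and a one-dimensional anisotropic line. I would first identify, as a representation of $\fD_{4n}$, which lines and planes inside $V$ are $G$-invariant and isotropic for $\mathsf{q}$.

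The subtlety, and the reason the result is not completely immediate, is that the obvious isotropic coordinate lines (the $x_i$-axes) are permuted by $\tau$ and so are not $G$-invariant; one must instead produce a genuinely $G$-stable isotropic subspace inside one of the hyperbolic $H_W$'s. Since $n$ is odd, $\fD_{4n}\cong\fD_{2n}\times C_2$ up to the relevant identifications, and the two faithful $2$-dimensional representations $W=\langle x_1,x_2\rangle$ and $W'=\langle x_3,x_4\rangle$ each restrict to a pair of weight lines for the cyclic subgroup with the dihedral involution swapping them; so $W$ itself (with $\mathsf{q}|_W$ hyperbolic for the pairing $x_1x_2$) is an irreducible isotropic subspace for $\mathsf{q}$ exactly in the sense of Proposition~\ref{prop:gethyp}. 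Thus I would take $W=\langle x_1,x_2\rangle$, note $W^\vee=\langle x_1,x_2\rangle$ pairs it into a hyperbolic $H_W$, and let $H_W^\perp=\langle x_3,x_4,x_5\rangle$ carry the residual form $-x_3x_4-x_5^2$, a non-degenerate form on a $3$-dimensional representation of $G$.

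Next I would check the generic-freeness hypotheses of Theorem~\ref{theo:isotropicSL}. Since $G$ acts on $X$ via the stated faithful $2$-dimensional representations, and by hypothesis the action on $X$ is generically free, the ``stably linearizable'' branch of the theorem requires only that $G$ act generically freely on $X$, which is the standing assumption. (If one wanted the stronger ``linearizable'' conclusion one would have to test generic freeness on $\bP(W^\vee\oplus H_W^\perp)=\bP^3$; but since the two faithful $2$-dimensional representations differ and together with the trivial character on $x_5$ need not give a generically free action on this $\bP^3$, I only claim the stable statement — which is exactly what Proposition~\ref{prop:spr} asserts.) Invoking Theorem~\ref{theo:isotropicSL} then gives that $X$ is stably linearizable.

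The main obstacle I anticipate is the bookkeeping in the second step: verifying that $W=\langle x_1,x_2\rangle$ is genuinely $G$-invariant (not merely invariant under the index-two cyclic subgroup) and irreducible as a $G$-representation, and that its $\mathsf{q}$-orthogonal complement decomposes as $W^\vee\oplus H_W^\perp$ compatibly with the $G$-action, so that Proposition~\ref{prop:gethyp} produces exactly the hyperbolic subspace $H_W=\langle x_1,x_2\rangle\oplus\langle x_1,x_2\rangle^\vee$ we want. Once the representation-theoretic identification of $W$, $W^\vee$, and $H_W^\perp$ is pinned down, the conclusion is a direct citation of Theorem~\ref{theo:isotropicSL}; everything else is routine.
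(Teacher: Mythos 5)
Your argument breaks down at its central step: the coordinate plane $W=\langle x_1,x_2\rangle$ (i.e.\ $\{x_3=x_4=x_5=0\}$) is \emph{not} an isotropic subspace for $\mathsf{q}=x_1x_2-x_3x_4-x_5^2$. The restriction $\mathsf{q}|_W=x_1x_2$ is a non-degenerate hyperbolic form, not the zero form, whereas the paper's definition of an isotropic subspace requires $\mathsf{q}|W=0$; your own phrase ``$W$ with $\mathsf{q}|_W$ hyperbolic \dots is an irreducible isotropic subspace'' is self-contradictory, and the ensuing bookkeeping (taking $W^{\vee}$ to be the same plane and $H_W^{\perp}=\langle x_3,x_4,x_5\rangle$) is incompatible with $H_W=W\oplus W^{\vee}$ having dimension $2\dim W$ as in Proposition~\ref{prop:gethyp}. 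Worse, in the case of interest there is \emph{no} $G$-invariant isotropic subspace at all: when the two faithful $2$-dimensional representations $W_1=\langle x_1,x_2\rangle$ and $W_2=\langle x_3,x_4\rangle$ of $\fD_{4n}$ are non-isomorphic (as happens for the weights $\lambda^{\pm 3}$ and $\lambda^{\pm 1}$ coming from the $V_5$ construction), Schur's lemma forces every invariant subspace of $V$ to be a direct sum of some of $W_1,W_2,\langle x_5\rangle$, and $\mathsf{q}$ restricts non-degenerately to each of these; so $(V,\mathsf{q})$ is anisotropic for $G$ and Theorem~\ref{theo:isotropicSL} cannot be applied to $G$ directly. (The side claim $\fD_{4n}\cong\fD_{2n}\times C_2$ is also false for $n$ odd in the paper's indexing, but nothing essential hinges on it.)

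The missing idea is the Springer-type reduction. Proposition~\ref{prop:springer} reduces stable linearizability for $G=\fD_{4n}$, $n$ odd, to its $2$-Sylow subgroup $\fD_4$ of order $8$. Since $\fD_4$ has a unique faithful irreducible $2$-dimensional representation, the restrictions of $W_1$ and $W_2$ to $\fD_4$ become isomorphic (the generators act with primitive fourth roots of unity on both planes), and Proposition~\ref{prop:mult}, read contrapositively, produces a $\fD_4$-invariant isotropic plane spanned by vectors of the form $(v,\lambda v)$ across the two isomorphic summands. Theorem~\ref{theo:isotropicSL} then gives linearizability of the $\fD_4$-action, and Proposition~\ref{prop:springer} transports stable linearizability back to $\fD_{4n}$. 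This is the paper's proof; your route, which attempts to feed the full group into Theorem~\ref{theo:isotropicSL} via a subspace that is in fact non-degenerate rather than isotropic, cannot be repaired without this Sylow reduction (except in the special case where the two representations of $\fD_{4n}$ happen to be isomorphic, where Proposition~\ref{prop:mult} would apply directly -- but even then the isotropic subspace is the diagonal-type plane, not $\langle x_1,x_2\rangle$).
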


\begin{proof}
Proposition~\ref{prop:springer} reduces
us to the $2$-Sylow subgroup $\fD_4 \subset \fD_{2n}$, which 
acts via
$$
\begin{array}{rcl}
\tau:   (x_1,x_2,x_3,x_4,x_5)& \mapsto & (x_2,x_1,x_4,x_3,x_5) \\
\sigma: (x_1,x_2,x_3,x_4,x_5) & \mapsto & (i x_1, i^3 x_2, 
\iota^3 x_3, \iota x_4,x_5),  
\end{array}
$$
where $i$ and $\iota$ are primitive fourth roots of unity.  
This has an isotropic subspace by Proposition~\ref{prop:mult} and hence is
linearizable (for $\fD_4$) by Theorem~\ref{theo:isotropicSL}.
\end{proof}

We now turn to the diagonal quadric
\begin{equation}
\label{eqn:dia}
X=\{ x_1^2+x_2^2+x_3^2+x_4^2 +x_5^2=0\}\subset \bP^4,
\end{equation}
with an action of a subgroup $G\subset W(\mathsf D_5)$, the Weyl group of $\mathsf D_5$, via signed permutations. 
In \cite[Section 9]{TYZ}, there is a classification of $G$ such that 
\begin{itemize}
\item all abelian subgroups $H\subset G$ have fixed points,
\item $G$ does not have a fixed point. 
\end{itemize}
Recall that existence of fixed points for generically free actions of abelian groups is a birational invariant of smooth projective varieties and that linear actions of abelian groups have fixed points. Furthermore, a fixed point yields a linearization for the action. 

The maximal groups on the list in \cite{TYZ} are:
$$
\fS_5, \quad  \fS_4, \quad  C_4\wr C_2,\quad  \mathsf{GL}_2(\bF_3), \quad \fS_3\times\fD_4, \quad \fD_8,
$$
and the maximal $2$-Sylow subgroups without fixed points are
$$
\fD_4\subset \fS_4, \quad C_4\wr C_2, \quad SD16 \subset  \mathsf{GL}_2(\bF_3), \quad \fD_8,  
$$
with specific 5-dimensional representations $V$, giving rise to $X\subset \bP(V)$.

\begin{prop}
\label{prop:another}
The $G$-action on $X$ in \eqref{eqn:dia} 
is stably linearizable if:
\begin{itemize}  
\item[(1)] 
$G=\fS_5$, 
with the standard permutation action, 
\item[(2)] 
$G=\fS_3\times C_2^2\subset \fS_3\times \fD_4$, 
with the standard permutation action of $\fS_3$ on the first variables and sign changes on $x_4,x_5$. 
\end{itemize}
\end{prop}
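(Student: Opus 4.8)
The plan is to handle the two cases separately, but in both cases the strategy is to exhibit a $G$-invariant isotropic subspace $W \subset V$ and then invoke Theorem~\ref{theo:isotropicSL}; if no isotropic subspace is available for $G$ itself, reduce to a $2$-Sylow subgroup via Proposition~\ref{prop:springer} and find the isotropic subspace there. Concretely, I would first decompose the $5$-dimensional representation $V$ (in each case a subrepresentation of the signed-permutation representation of $W(\mathsf{D}_5)$) into irreducibles, and use Proposition~\ref{prop:mult} as a diagnostic: an anisotropic quadratic form cannot have repeated irreducible summands with their distinguished forms, so whenever the decomposition exhibits a repeated self-dual summand (up to scaling) we get an isotropic subspace for free, hence linearizability by Theorem~\ref{theo:isotropicSL}, and a fortiori stable linearizability.

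For case (2), $G = \fS_3 \times C_2^2$ with $\fS_3$ permuting $x_1,x_2,x_3$ and the two $C_2$-factors acting by sign changes on $x_4$ and $x_5$ respectively: the representation $V$ splits as $(\text{perm. rep. on } x_1,x_2,x_3) \oplus \langle x_4\rangle \oplus \langle x_5\rangle$, i.e.\ $\mathbf{1} \oplus V_2 \oplus \chi_4 \oplus \chi_5$, where $V_2$ is the standard $2$-dimensional representation of $\fS_3$ (pulled back), and $\chi_4, \chi_5$ are the two distinct nontrivial characters of $C_2^2$ (pulled back). Here all summands are non-isomorphic, so Proposition~\ref{prop:mult} gives no obstruction and indeed the diagonal form is anisotropic for $G$. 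So I would pass to a $2$-Sylow subgroup $G_2 = C_2^2 \times C_2^2$ — wait, more carefully, $G_2 = C_2 \times C_2^2$ with the $C_2 \subset \fS_3$ acting by the transposition swapping two of $x_1,x_2,x_3$. Over this subgroup the span of $x_1,x_2,x_3$ breaks further: the transposition $(x_1 x_2)$ has a $(+1)$-eigenspace $\langle x_1+x_2, x_3\rangle$ and a $(-1)$-eigenspace $\langle x_1-x_2\rangle$; combined with the sign actions on $x_4,x_5$ one checks that some character of $G_2$ now appears twice among the five $1$-dimensional summands (for instance the character that is trivial on the $\fS_3$-transposition can occur on both $x_3$ and — after the obvious rescaling — no; rather the point is to find two $1$-dimensional summands carrying the same character, which then yields an isotropic line). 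Then Proposition~\ref{prop:mult}/Theorem~\ref{theo:isotropicSL} give linearizability for $G_2$, and Proposition~\ref{prop:springer} upgrades this to stable linearizability for $G$.

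For case (1), $G = \fS_5$ acting by permutation on $x_1,\dots,x_5$: the representation $V$ is the full permutation representation $\mathbf{1} \oplus V_4$ (standard $4$-dimensional irreducible), with the diagonal form restricting to a nonzero invariant form on each summand — these summands are non-isomorphic, so again the form is anisotropic for $\fS_5$ itself and there is no fixed point (this is why linearizability is open, cf.\ the earlier example with $\fS_3 \times C_2$). I would invoke Proposition~\ref{prop:springer} to reduce to a $2$-Sylow subgroup $G_2 = \fD_4 \subset \fS_5$ (of order $8$), say generated by the $4$-cycle $(x_1 x_2 x_3 x_4)$ and the reflection $(x_1 x_3)$, fixing $x_5$. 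Over $\fD_4$ the span of $x_1,x_2,x_3,x_4$ decomposes as $\mathbf{1} \oplus \chi \oplus V_2$ where $\chi$ is a nontrivial character and $V_2$ the $2$-dimensional irreducible of $\fD_4$; together with $\langle x_5\rangle$ carrying the trivial character, $V|_{\fD_4} \cong \mathbf{1}^{\oplus 2} \oplus \chi \oplus V_2$ — the trivial character appears with multiplicity two. By Proposition~\ref{prop:mult} this form is isotropic for $\fD_4$ (explicitly, the isotropic vector is $x_5 + i\cdot(\text{the }\fD_4\text{-fixed vector } x_1+x_2+x_3+x_4)$, up to normalization), so Theorem~\ref{theo:isotropicSL} gives linearizability of the $\fD_4$-action on $X$, and Proposition~\ref{prop:springer} then yields stable linearizability of the $\fS_5$-action.

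The main obstacle I anticipate is the bookkeeping in case (2): one must pick the $2$-Sylow subgroup correctly and then verify that the five $1$-dimensional characters on $\langle x_1 \pm x_2 \rangle, \langle x_3\rangle, \langle x_4\rangle, \langle x_5\rangle$ genuinely have a coincidence after accounting for the pullbacks from both the $\fS_3$ and $C_2^2$ factors — it is conceivable that a single reduction step is not enough and one needs to be slightly more careful about which transposition to include, or to argue directly that Theorem~\ref{theo:isotropicSL}'s generic-freeness hypothesis on $\bP(W^\vee \oplus H_W^\perp)$ is met. Everything else is a short eigenspace computation plus citation of the results already in hand.
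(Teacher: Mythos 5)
Your proposal is correct and takes essentially the same route as the paper: reduce to the $2$-Sylow subgroup via Proposition~\ref{prop:springer} and settle that case directly. The only real difference is cosmetic: the isotropic lines you construct (spanned by $2x_5+i(x_1+x_2+x_3+x_4)$ for $\fD_4\subset\fS_5$, and by $x_1+x_2+i\sqrt{2}\,x_3$ for $C_2\times C_2^2$ --- the coincidence you left unchecked does hold, since $x_1+x_2$ and $x_3$ both carry the trivial character of $G_2$) are pointwise fixed, so your appeal to Theorem~\ref{theo:isotropicSL} amounts to Corollary~\ref{coro:fix}, and the paper states the same thing more briefly: the $2$-Sylow subgroup has fixed points on $X$.
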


\begin{proof}
This is another corollary of Proposition~\ref{prop:springer}:
the $2$-Sylow subgroup 
$G_2\subset G$ has fixed points. 
\end{proof}

\begin{rema}
The $G$-action in Case (1) is known to admit only two Mori fiber space models, and in particular is not linearizable, 
by \cite[Theorem 3.1]{cheltsov}; in Case (2), the Burnside obstructions of \cite{BnG} prevent linearizability, see \cite[Example 9.2]{TYZ}.
\end{rema}

Theorem~\ref{theo:isotropicSL} yields the stronger result: 

\begin{prop}
\label{prop:iso-app}
The following $G$-actions on the quadric
$$
X:=\{ x_1^2+x_2^2+x_3^2+x_4^2 +x_5^2=0\}\subset \bP^4, 
$$
are linearizable:
\begin{itemize}
\item the unique (up to conjugation) $G=C_4\wr C_2\subset W(\mathsf D_5)$,
\item the unique semidihedral group $SD16\subset W(\mathsf D_5)$, 
\item the unique dihedral group $\mathfrak D_8\subset  W(\mathsf D_5)$. 
\end{itemize}
\end{prop}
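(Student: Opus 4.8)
The plan is to verify, for each of the three groups $G \in \{C_4 \wr C_2, SD16, \mathfrak D_8\}$ sitting inside $W(\mathsf D_5)$, that the given diagonal quadratic form $\mathsf q = x_1^2 + \cdots + x_5^2$ carries a nonzero $G$-invariant isotropic subspace $W \subset V = k^5$, and then to invoke Theorem~\ref{theo:isotropicSL} together with the genericity of the action. Concretely, I would first fix, for each group, the specific signed-permutation representation $V$ recorded in \cite{TYZ} (the one giving a fixed-point-free action), and decompose $V$ into irreducibles. Since $\dim V = 5$ is odd, at least one irreducible summand has odd dimension; the real question is which summands are self-dual with respect to $\mathsf q$ and whether any isotropic direction survives. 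By Proposition~\ref{prop:mult}, if any irreducible self-dual summand were to appear with multiplicity, $\mathsf q$ would automatically be isotropic; so the first concrete step is to check the multiplicities in each decomposition. For $C_4 \wr C_2$ (order 32) acting with a $2$-dimensional faithful piece doubled up, this is likely exactly what happens.

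Next I would handle the cases where $\mathsf q$ is multiplicity-free on its self-dual part. Here the point is that a complex irreducible summand $V_i$ that is \emph{not} self-dual (i.e.\ $V_i \not\simeq V_i^\vee$ as $G$-representations) must pair trivially with itself under $\mathsf q$ and hence $V_i$ is itself isotropic — or, if $V_i^\vee$ also appears, then $V_i \oplus V_i^\vee$ is a hyperbolic subspace and $V_i$ is isotropic. So the second concrete step is, for $SD16$ and $\mathfrak D_8$, to locate a complex-conjugate pair of non-self-dual $1$- or $2$-dimensional characters among the constituents of the chosen $5$-dimensional representation. For $\mathfrak D_8$ the irreducible representations are four characters and one $2$-dimensional faithful representation $V_2$; the $2$-dimensional one is self-dual, so the isotropic subspace must come either from a repeated $V_2$ (excluded by multiplicity-freeness) or from the $1$-dimensional constituents — I would check that the sign characters occurring do not all coincide with the trivial one, so that a pair $\chi, \chi$ with $\chi^2 = \mathbf 1$, $\chi \neq \mathbf 1$ gives an isotropic plane $\langle e, e'\rangle$ on which $\mathsf q$ restricts to a rank-$2$ form and can be rotated to contain an isotropic line, or more simply that some $1$-dimensional piece pairs into a hyperbolic $2$-plane. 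For $SD16$ the analogous analysis uses its two $2$-dimensional faithful representations, one of which (or a character) will supply the isotropic direction.

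The third step, once an isotropic $W \neq 0$ is in hand, is purely formal: apply Proposition~\ref{prop:gethyp} to split off $H_W \simeq W \oplus W^\vee$, then apply Theorem~\ref{theo:isotropicSL}. Since these groups act fixed-point-freely on $X$, they act generically freely, and — crucially — one must check the hypothesis ``$G$ acts generically freely on $\bP(W^\vee \oplus H_W^\perp)$'' for the \emph{linearizability} conclusion (as opposed to merely stable linearizability). This is where the argument could get delicate: the smaller representation $W^\vee \oplus H_W^\perp$ might develop a kernel on projective space even though $G$ was faithful on $X$. I expect this to be the main obstacle. The escape route, exactly as in the proof of the proposition before Theorem~\ref{theo:isotropicSL}, is that any failure of generic freeness on $\bP(W^\vee \oplus H_W^\perp)$ forces an odd-order central cyclic subgroup $C_\ell \subset G$ acting by scalars of opposite weight on the two halves of a further hyperbolic summand — but all three groups here are $2$-groups, so no such $C_\ell$ with $\ell$ odd can exist, and the potential obstruction vanishes. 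I would therefore organize the write-up to first dispose of this point uniformly (it is a $2$-group, so stable linearizability from Theorem~\ref{theo:isotropicSL} upgrades to linearizability), and then treat the existence of the isotropic subspace group-by-group, the $C_4 \wr C_2$ case via Proposition~\ref{prop:mult} and the other two by exhibiting an explicit non-self-dual or hyperbolic constituent in the signed-permutation representation.
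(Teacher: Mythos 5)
Your strategy works for the first two cases and is essentially the paper's argument there: for $C_4\wr C_2$ and $SD16$ the restriction of the $5$-dimensional representation decomposes as $V_2\oplus V_2'\oplus\chi$ with $V_2'\simeq V_2^\vee\not\simeq V_2$ (the characters are complex conjugate, not real), so $V_2$ is an invariant isotropic subspace and Theorem~\ref{theo:isotropicSL} applies; your side remark that for $C_4\wr C_2$ the isotropy ``likely'' comes from a repeated summand via Proposition~\ref{prop:mult} is off (there is no repetition; it is a dual pair), but your fallback argument via non-self-dual constituents is exactly what is needed. Your uniform point that generic freeness on $\bP(W^\vee\oplus H_W^\perp)$ cannot fail here is also fine in substance: an element acting by a scalar $\zeta$ on $W^\vee\oplus H_W^\perp$ preserves the nondegenerate form on $H_W^\perp\neq 0$, forcing $\zeta=\pm1$, and then it acts by $\zeta^{\mp1}=\zeta$ on $W$ as well, i.e.\ as a scalar on all of $V$, contradicting generic freeness on $X$ — so one does not even need the $2$-group hypothesis.

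The genuine gap is the third case. The paper's $\mathfrak D_8$ is the dihedral group of order $16$ (it has four characters and three $2$-dimensional irreducibles, not ``four characters and one $2$-dimensional representation'' as you write), and, like every dihedral group, it is ambivalent: all its irreducible characters are real, hence every irreducible summand is self-dual. The actual restriction of the $W(\mathsf D_5)$-representation decomposes as $V_2\oplus V_2'\oplus\chi$ with $V_2\not\simeq V_2'$ the two faithful $2$-dimensional irreducibles (character values $\pm\sqrt2$) and $\chi$ a character, all self-dual and pairwise distinct. Consequently every $G$-invariant subspace is a sum of these isotypic pieces, the form restricts nondegenerately to each, and there is \emph{no} nonzero invariant isotropic subspace: the quadric is equivariantly anisotropic, so neither Proposition~\ref{prop:mult} nor a ``complex-conjugate pair'' can be found, and Theorem~\ref{theo:isotropicSL} is simply unavailable. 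The paper handles this case by an entirely different mechanism: the $\bG_m\rtimes\mu_2$-action on the quintic del Pezzo threefold $V_5$ (Section~\ref{sect:quad3}, with $\lambda=e^{\pi i/4}$), where projection from the invariant conic linearizes $V_5$ to $\bP^3$ and projection from the invariant line maps $V_5$ equivariantly and birationally onto a quadric threefold of exactly this shape $V_2\oplus V_2'\oplus\mathbf 1$ — the ``twisted cubic'' construction. Without some substitute of this kind your proof of the $\mathfrak D_8$ case does not go through.
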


\begin{proof}
The restriction of the $W(\mathsf D_5)$-action to the (unique) $C_4\wr C_2$ has character 
$
( 5, -3, 1, 1, 1, 1, 1, -3, 1, 1, -3, 1, -1, -1 ). 
$
Concretely, the action is given by
{\small
$$
\sigma_1:=\begin{pmatrix}
0 &   0 &    1 &  0\\
0 &   0 &  0 &  1\\
1 &0 &  0 &  0\\
 0 & 1 &   0 & 0
 \end{pmatrix}, \quad 
\sigma_2:=\begin{pmatrix}
0 &   -1 &    0 &  0\\
1 &   0 &  0 & 0\\
0 &0 &  1 &  0\\
 0 & 0 &  0 & 1
\end{pmatrix}, \quad
\sigma_3:=
\begin{pmatrix}
1 &   0 &    0 & 0\\
0 &   1 &  0 &  0\\
 0 & 0 &  0 &  -1\\
0 & 0 &   1 & 0
\end{pmatrix},
$$
}
$$
\sigma_4:=\mathrm{diag}(-1,-1,1,1), \quad \sigma_5:=
\mathrm{diag}(1,1,-1,-1), 
$$
with $\sigma_i$ acting on the variables $x_1,\ldots, x_4$ as indicated, with $\sigma_1: x_5\mapsto -x_5$, and all other generators acting trivially on $x_5$. 
In particular, 
$$
X\subset \bP^4=\bP(V_2\oplus V_2'\oplus \chi),
$$
where the characters are given by
$$
\begin{array}{rcl}
\mathrm{char}(V_2)&=&(2, -2,  0,  0,  2i, -2i, 1-i,  0, 1+i,-1-i,-1+i,  0,  0,  0),\\
\mathrm{char}(V_2')&=&(2, -2,  0,  0,  -2i, 2i, 1+i,  0, 1-i,-1+i,-1-i,  0,  0,  0).
\end{array}
$$
Theorem~\ref{theo:isotropicSL} applies.

We turn to $SD16$: the restriction of the $W(\mathsf D_5)$-action
to $G=SD16$ yields a representation with character
$
( 5, -3, -1, 1, 1, -1, -1 ).
$
The action is given by 
{\small
$$
\sigma_1:=\begin{pmatrix}
1 &   0 &    0 &  0\\
0 &   0 &  -1 &  0\\
 0 &-1 &  0 &  0\\
 0 & 0 &   0 & -1
 \end{pmatrix}, \quad 
\sigma_2:=\begin{pmatrix}
0 &   1 &    0 &  0\\
0 &   0 &  0 &  -1\\
 1 &0 &  0 &  0\\
 0 & 0 &   1 & 0
\end{pmatrix}, \quad
\sigma_3:=\begin{pmatrix}
0 &   0 &    0 &  -1\\
0 &   0 &  -1 &  0\\
 0 & 1 &  0 &  0\\
 1 & 0 &   0 & 0
 \end{pmatrix},
$$
}
$$
\sigma_4:=\mathrm{diag}(-1,-1,-1,-1), 
$$
with $\sigma_i$ acting on $x_1,\ldots, x_4$ as indicated and 
$\sigma_1,\sigma_2$ acting via $x_5\mapsto -x_5$, and $\sigma_3,\sigma_4$ acting trivially on $x_5$. Thus
$$
X \subset \bP^4=\bP(V_2\oplus V_2'\oplus \chi)
$$
as an invariant quadric with 
generically free $G$-action, where $V_2,V_2'$ are 2-dimensional faithful, complex conjugate, representations of $G$, and $\chi$ is a character. Since $V_2'=V_2^\vee$, Theorem~\ref{theo:isotropicSL} applies.

We repeat the analysis for $G=\mathfrak D_8$. The restriction of the $W(\mathsf D_5)$-action to $G$ yields a representation $V$ with character
$
( 5, -3, -1, 1, 1, -1, -1 ),   
$
which decomposes as 
$$
V=V_2\oplus V_2' \oplus \chi, 
$$
where the characters are given by
$$
\begin{array}{rcl}
\mathrm{char}(V_2) & = & (2,-2,0,0,0, \sqrt{2}, -\sqrt{2}) \\
\mathrm{char}(V_2') & = & (2,-2,0,0,0, -\sqrt{2}, \sqrt{2}) \\
\chi & = & (1,  1, -1,  1,  1,  -1,  -1),
\end{array}
$$
and the standard linearizability via a twisted cubic applies.
\end{proof}

\begin{coro}
\label{coro:final}
Let $G\subset W(\mathsf D_5)$ be such that the induced action on the diagonal quadric 
$$
X\subset \bP^4=\bP(V),
$$
via the standard irreducible 5-dimensional representation 
$V$ of $W(\mathsf D_5)$
satisfies the following properties:
\begin{itemize}
\item for every abelian $H\subseteq G$ one has $X^H\neq\emptyset$,
\item $G$ does not contain a subgroup $H\simeq \mathfrak D_4$ such that the restriction
of the representation $V$ to $H$ decomposes as
$$
V|_H = V_2\oplus \chi\oplus\chi'\oplus \chi'',
$$
where $\chi,\chi',\chi''$ are pairwise distinct characters of $H$. 
\end{itemize}
Then the $G$-action on $X$ is stably linearizable. 
\end{coro}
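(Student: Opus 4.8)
The plan is to reduce Corollary~\ref{coro:final} to the $2$-Sylow subgroup via Proposition~\ref{prop:springer}, and then to run through the finite list of maximal $2$-Sylow subgroups of $W(\mathsf D_5)$ acting without fixed points on $X$, which was recalled just before Proposition~\ref{prop:iso-app}: namely $\fD_4 \subset \fS_4$, $C_4 \wr C_2$, $SD16 \subset \GL_2(\bF_3)$, and $\fD_8$. By Proposition~\ref{prop:springer}, stable linearizability of the $G$-action on $X$ is equivalent to stable linearizability of the $G_2$-action, so it suffices to verify the latter for every $2$-group that can occur as $G_2$, under the stated hypotheses.

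First I would dispose of the case where $G_2$ has a fixed point on $X$: then $X$ is linearizable for $G_2$ by Corollary~\ref{coro:fix}, hence stably linearizable, and we are done by Proposition~\ref{prop:springer}. So we may assume $G_2$ is one of the fixed-point-free $2$-groups on the list. For $C_4 \wr C_2$, $SD16$, and $\fD_8$, Proposition~\ref{prop:iso-app} shows the action on $X$ is already linearizable (the first two via an isotropic subspace and Theorem~\ref{theo:isotropicSL}, the last via the twisted-cubic construction), so stable linearizability follows. The remaining case is $G_2 \simeq \fD_4$. Here I would invoke the first hypothesis of the corollary: since every abelian subgroup $H \subseteq G$ has $X^H \neq \emptyset$, the same holds for abelian subgroups of $G_2 \simeq \fD_4$. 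This forces the restriction $V|_{\fD_4}$ to contain a $2$-dimensional irreducible summand together with characters — more precisely, $V|_{\fD_4}$ cannot be a sum of characters (else an abelian, indeed cyclic, subgroup would act diagonally and fail to have a fixed point only if... — in any case the fixed-point condition on abelian subgroups pins down the isotypic shape). The point is that $V|_{\fD_4} = V_2 \oplus \chi \oplus \chi' \oplus \chi''$ with $V_2$ the faithful $2$-dimensional representation, and the second hypothesis of the corollary is precisely the statement that the three characters $\chi, \chi', \chi''$ are \emph{not} pairwise distinct. Hence (at least) two of them coincide.

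With two equal characters among $\chi, \chi', \chi''$, the isotropic-subspace mechanism kicks in: as in the proof of Proposition~\ref{prop:mult}, a repeated one-dimensional self-dual summand $(\mathbf 1_\chi, \mathsf q_\chi) \oplus_\perp (\mathbf 1_\chi, \mathsf q_\chi)$ inside $(V,\mathsf q)$ contains an isotropic line (the image of $v \mapsto (v, iv)$), so $V|_{\fD_4}$ has a nonzero isotropic subspace $W$. One then checks that the complementary quadratic form $W^\vee \oplus H_W^\perp$ (equivalently $V'$ in the notation of Corollary~\ref{coro:anisotropic}) carries a generically free $\fD_4$-action on its projectivization — this is where the argument needs a short verification, using that the faithful $2$-dimensional summand $V_2$ survives in the complement and that $\fD_4$ has no nontrivial central element acting trivially on the remaining coordinates. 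Then Theorem~\ref{theo:isotropicSL} gives linearizability of the $\fD_4$-action on $X$, hence stable linearizability, and Proposition~\ref{prop:springer} propagates this back to $G$.

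The main obstacle, and the only step that is not bookkeeping, is the middle one: translating the two hypotheses of the corollary into the assertion that $V|_{\fD_4}$ has a repeated character summand, and then confirming that Theorem~\ref{theo:isotropicSL}'s generic-freeness hypothesis on $\bP(W^\vee \oplus H_W^\perp)$ is met. The first part is a small character-theoretic case analysis over the subgroups $\fD_4 \subset W(\mathsf D_5)$ acting without fixed points on $X$ (one must rule out, using the abelian-fixed-point hypothesis, the decompositions $V|_{\fD_4}$ that are sums of four characters or of $V_2 \oplus V_2'$ with no common character), and the second is a direct inspection of the central subgroups of $\fD_4$ against the complementary representation. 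Everything else is an application of results already proved in the excerpt.
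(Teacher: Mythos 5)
Your architecture is the intended one (the corollary is meant to follow from Proposition~\ref{prop:springer}, Corollary~\ref{coro:fix}, the TYZ classification, Proposition~\ref{prop:iso-app}, and Theorem~\ref{theo:isotropicSL}), but there is a genuine gap in the reduction step ``we may assume $G_2$ is one of the fixed-point-free $2$-groups on the list.'' That list consists of the \emph{maximal} fixed-point-free $2$-Sylow subgroups; hypothesis (1) together with the case assumption $X^{G_2}=\emptyset$ only places $G_2$, up to conjugacy, \emph{inside} one of $\fD_4\subset\fS_4$, $C_4\wr C_2$, $SD16$, $\fD_8$, not equal to one of them. For example $G=G_2\simeq Q_8\subset SD16$ acts through $V_2\oplus V_2\oplus\chi$, has no fixed point on $X$, all its abelian subgroups ($C_2$ and the three $C_4$'s) do have fixed points, and it contains no $\fD_4$, so it satisfies both hypotheses yet is covered by none of your cases as written. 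To close this you need one of two easy observations: (stable) linearizability is inherited by subgroups acting generically freely, so the equivariant birational maps of Proposition~\ref{prop:iso-app} restrict from the maximal $2$-group to $G_2$; or, directly, any such smaller $G_2$ has an invariant isotropic subspace (a repeated irreducible summand as in Proposition~\ref{prop:mult}, or a non-self-dual summand, which is automatically isotropic since $\mathsf q$ identifies its pairing partner with its dual -- this is exactly how Proposition~\ref{prop:iso-app} treats the complex-conjugate pairs). In the branch where the ambient maximal $2$-group is $\fD_4\subset\fS_4$, your use of hypothesis (1) does force $G_2=\fD_4$ itself (proper subgroups are abelian, hence have fixed points), and hypothesis (2) then excludes exactly the bad decomposition; but note the remaining a priori shape $V|_{\fD_4}=V_2\oplus V_2\oplus\chi$, which you omit -- it is harmless, again by the repeated-summand isotropic argument. (Also, the reason $V|_{\fD_4}$ is not a sum of characters is simply faithfulness of a nonabelian action, not the fixed-point condition.)

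Separately, the step you single out as the ``main obstacle'' is not needed for the stated conclusion. The corollary asserts only \emph{stable} linearizability, and the second clause of Theorem~\ref{theo:isotropicSL} requires only that the group act generically freely on $X$, which is automatic for $G_2\subseteq G$. So once the isotropic subspace is produced you may quote that clause and skip the verification of generic freeness on $\bP(W^{\vee}\oplus H_W^{\perp})$ entirely; that check would only be relevant if you wanted genuine linearizability of the $\fD_4$-action, as in Proposition~\ref{prop:iso-app}. With these two repairs your proof coincides with the argument the paper intends.
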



\begin{rema}
The methods above give no information about (stable) linearizability of the following $G$-actions on smooth quadric threefolds $X\subset \bP(V)$: 
\begin{itemize}
\item $G=\mathfrak D_4$ and $V=V_2\oplus \chi\oplus \chi'\oplus \chi''$, 
where $V_2$ is the unique irreducible 2-dimensional representation and $\chi,\chi',\chi''$ are pairwise distinct characters. 
\item $G=\mathfrak D_8$ and $V=V_2\oplus V_2' \oplus \mathbf 1$, where $V_2$ is the non-faithful 2-dimensional irreducible representation and $V_2'$ is a faithful representation of $G$. 
\end{itemize}
\end{rema}

\bibliographystyle{alpha}
\bibliography{EGHS}

\end{document}